\theoremstyle{plain}
\theoremstyle{plain}
\newtheorem{theorem}{Theorem}[section]
\newtheorem{lemma}[theorem]{Lemma}
\newtheorem{corollary}[theorem]{Corollary}
\theoremstyle{definition}
\newtheorem{defin}[theorem]{Definition}
\newtheorem{remark}[theorem]{Remark}
\newtheorem{example}{Example}
\theoremstyle{remark}
\def\bk{\color{black}}
\numberwithin{equation}{section}
\def\into{\int_{\Omega}}
\def\dis{\displaystyle}
\DeclareMathOperator{\sgn}{sgn}
\DeclareMathOperator{\R}{\mathbb{R}}
\newcommand{\car}[1]{\raise1pt\hbox{$\chi$}_{#1}}
\def\rn{\mathbb{R}^N}
\def\re{\mathbb{R}}
\newcommand{\res}{\!\!\mathop{\hbox{
			\vrule height 7pt width .5pt depth 0pt
			\vrule height .5pt width 6pt depth 0pt}}
	\nolimits}
\begin{document}
\title[Absorption terms in  Dirichlet problems for the prescribed mean curvature equation]{The role of absorption terms in  Dirichlet problems for the prescribed mean curvature equation}

\author[F. Oliva]{Francescantonio Oliva}
\author[F. Petitta]{Francesco Petitta}
\author[S. Segura de Le\'on]{Sergio Segura de Le\'on}

\address{Francescantonio Oliva
	\hfill \break\indent
Dipartimento di Scienze di Base e Applicate per l' Ingegneria, Sapienza Universit\`a di Roma
	\hfill \break\indent
	Via Scarpa 16, 00161 Roma, Italy}
\email{\tt francescantonio.oliva@uniroma1.it}
\address{Francesco Petitta 	\hfill \break\indent Dipartimento di Scienze di Base e Applicate per l' Ingegneria, Sapienza Universit\`a di Roma
		\hfill \break\indent
		Via Scarpa 16, 00161 Roma, Italy}
\email{\tt francesco.petitta@uniroma1.it}
\address{Sergio Segura de Le\'on
	\hfill \break\indent Departament d'An\`alisi Matem\`atica,
	Universitat de Val\`encia, \hfill\break\indent Dr. Moliner 50,
	46100 Burjassot, Val\`encia, Spain.} \email{{\tt
		sergio.segura@uv.es}}

\keywords{prescribed mean curvature, functions of bounded variation, non-parametric minimal surfaces, nonlinear elliptic equations, $L^1$-data} \subjclass[2020]{35J25, 35J60,  35J75, 35J93, 35A01}

\begin{abstract}
In this paper we study existence and uniqueness of solutions to Dirichlet problems as 
$$
	\begin{cases}
		g(u) \dis -\operatorname{div}\left(\frac{D u}{\sqrt{1+|D u|^2}}\right) = f & \text{in}\;\Omega,\\
		u=0 & \text{on}\;\partial\Omega,
	\end{cases}
$$
where $\Omega$  is an open bounded subset of $\R^N$ ($N\ge 2$) with Lipschitz boundary, $g:\re\to\re$ is a continuous function and $f$ belongs to some Lebesgue spaces. In particular, under suitable saturation and sign assumptions,  we explore the regularizing effect given by the absorption term $g(u)$ in order to get a  solutions for data $f$ merely belonging to $L^1(\Omega)$ and with no smallness assumptions on the norm. We also prove a  sharp boundedness result  for data in $L^{N}(\Omega)$ \bk  as well as uniqueness if $g$ is increasing.
\end{abstract}

\maketitle
\tableofcontents

\section{Introduction}

We study existence and uniqueness of solutions to problem
\begin{equation}
	\label{pbintro}
	\begin{cases}
		g(u) \dis -\operatorname{div}\left(\frac{D u}{\sqrt{1+|D u|^2}}\right) = f & \text{in}\;\Omega,\\
		u=0 & \text{on}\;\partial\Omega,
	\end{cases}
\end{equation}
where $\Omega$  is an open bounded subset of $\R^N$ ($N\ge 2$) with Lipschitz boundary, $g:\re\to\re$ is a continuous, and  the datum $f$ belongs  to $L^1(\Omega)$.

The main purpose of this paper is to  describe the regularizing effect of zero order absorption terms on the existence of solutions for  boundary value problems as in \eqref{pbintro}. \bk

Let us recall that equation in \eqref{pbintro}, if $f=0$ and without any absorption,  falls in the well known case of minimal surface equation
$$
\dis \operatorname{div}\left(\frac{D u}{\sqrt{1+|D u|^2}}\right) =0,
$$
   the name deriving from the fact that, for a smooth  function $u$, the involved operator calculates the mean curvature of the graph of $u$ at each point $(x,u(x))$; due to this fact such an operator is also called {\it non-parametric mean curvature operator}. 

Several cases  of  (non-parametric) prescribed mean curvature equation of the type
\begin{equation}
\label{pb0}
\begin{cases}
\dis -\operatorname{div}\left(\frac{D u}{\sqrt{1+|D u|^2}}\right) = f & \text{in}\;\Omega,\\
u=0 & \text{on}\;\partial\Omega,
\end{cases}
\end{equation}
have been considered as well  in  literature    starting by \cite{se}, \cite{g76,g78}, \cite{g}, and   \cite{fg84} to present a non-complete list. 

It is also worth to point out  that the  equation in \eqref{pbintro}, with $g(s)=s$, corresponds to the resolvent equation of the following evolution equation

\begin{equation}\label{acm} u_t  =\operatorname{div}\left(\frac{D u}{\sqrt{1+|D u|^2}}\right) \,;\end{equation}
 roughly speaking, proving existence and uniqueness for \eqref{pbintro},  can be considered as a first step in order to apply Crandall-Liggett theory (\cite{cl}) to look for   mild solutions to the corresponding evolution problem. In a more general context these type of arguments  have been successfully applied in order to get existence and uniqueness of Cauchy initial-boundary value problems involving equations as in  \eqref{acm}  in the framework of entropy type solutions and with $L^1$-initial data (see \cite{dt, ACM_MA, ACM_RMI, ACM, gm} for a quite exaustive account on this issue).\bk 
 \medskip

Concerning less theoretical issues, problems  as in \eqref{pb0} arise     in the study of  combustible gas dynamics (see \cite{yy} and references therein) as well as in  surfaces capillary problem as pendant liquid drops (\cite{finn74,cf1,cf2, fg84}) and also in design of water-walking devices (\cite{ww}, see also \cite{lc}). 

\medskip

Prescribed mean curvature equations as in \eqref{pb0}  formally represent the Euler-Lagrange equation of the functional 
$$
\mathcal{J}(v)=\int_\Omega \sqrt{1+|\nabla v|^2}\,dx  + \int_{\partial\Omega} |v| d\mathcal{H}^{N-1}  - \into fv  dx\,, 
$$
involving the area functional. 

As regards the solvability of problems as in \eqref{pb0}, a smallness assumption on the data naturally appears: indeed, if we  formally integrate the equation in \eqref{pb0} in a smooth sub-domain of $A\subset\Omega$, an application of 
 the divergence theorem gives  the following {\it necessary condition}
$$
\left|\int_A f(x) dx\right| =\left| \int_{\partial A} \frac{D u}{\sqrt{1+|D u|^2}}\cdot \nu_A ds \right|< {\rm Per}(A)
$$
where ${\rm Per}(A)$ indicates the perimeter of $A$ and $\nu_A$ is the outer normal unit vector. 
That is,  some sort of  smallness assumption on the datum $f$ is needed in order to get existence in problems as \eqref{pb0}. This is a typical feature of problems arising from functionals with linear growth as, for instance,  the one driven by the  $1$-laplacian  (see for instance \cite{CT,KS, DGOP}). See also Remark \ref{rem:1} below for more details on this structural obstruction. In \cite{g} M. Giaquinta  shows the unique solvability  of \eqref{pb0},  in a variational sense,  \bk in the space of functions with bounded variation provided $f$ is measurable and there exists $\varepsilon_0>0$ such that for every smooth $A\subseteq \Omega$
\begin{equation}\label{giac}
\left|\int_A f(x) \,dx\right|  \leq (1-\varepsilon_0){\rm Per}(A)\,. 
\end{equation}

In \cite{g76} it is shown that 
$$
||f||_{L^N (\Omega)}< N\omega_{N}^{\frac1N} \,,
$$
is a general condition under which \eqref{giac} holds, where $\omega_N$ is the measure of the unit ball of $\rn$ and it is a sharp request in order to get  bounded solutions for problem \eqref{pb0} (see \cite{gop2}). 

\medskip

Less regularity for data $f\in L^q(\Omega)$  below the threshold $q=N$ is known to be more challenging  than  the classical variational  setting of $BV$-solutions for equations arising from functionals with linear growth and one needs a different approach, see for instance \cite{mst2} and \cite{lops}. We also point out that  these generalized solutions are,   in general,  bounded only for data $f \in L^{N,\infty} (\Omega)$ with small norm.  

\medskip 

As we said, our  main focus \bk  consists in  analyze the regularizing effect of zero order absorption terms for problems  as   in \eqref{pbintro}
\medskip 
where $g:\re \to \re$ is a continuous functions such that  $$g(s) \to \pm \infty\ \ \text{ as}\ \   s\to\pm\infty \ \ \ \   \text{and}\  \ g(s)s\geq 0.$$  
 We show that solutions of  \eqref{pb0}  do  exist for general data $f\in L^1(\Omega) $ no matter of the size of $f$ and,  if $g:\re\to\re$ is increasing,  the solution \bk is unique. Moreover,  if $f\in L^N(\Omega)$, then solutions to  \eqref{pbintro} lie in $L^{\infty}(\Omega)$, again, without any restriction on the norm of $f$. As a remarkable fact this result is sharp at Lorentz scale since, as  we will  show by means of an explicit  counter-example,  unbounded solutions may exist for data in $f\in L^{N,\infty}(\Omega)$.  The boundedness of solutions for $L^N$-data is a bit unexpected since the extension of the Calderon-Zygmund regularity theory just guarantees bounded solutions when data belong to $L^m(\Omega)$ for $m>N$.

\medskip
In the first part we work by approximation proving existence of a $BV$-solution of problem \eqref{pbintro} when $f\in L^N(\Omega)$; in this case the regular approximation scheme is suitably chosen involving   $p$-Laplacian type operators. This part has some overlap with \cite{dt} or \cite{ACM_RMI} in case $g(s)=s$.  We present our results    in case of a generic nonlinearity  $g$ and with a quite different approach based on the $L^{\infty}$-estimate.  
In the second part we look for  infinite energy solutions of problem \eqref{pbintro} when $f$ is a merely integrable function. In this case the approximation scheme is given by solutions to problems as \eqref{pbintro} whose existence has been proven in  the first part and we only approximate the datum $f$. We  remark that, from a different point of view, problem \eqref{pb0} (again in the case of linear absorption) with $L^1$-data   is studied in \cite{ACM_MA} and \cite{gm}.

\medskip \medskip 

The plan of the paper is the following: in Section \ref{due} we set the basic machinery on $BV$ spaces (the natural space in which these problems are well settled), and the 
Anzellotti-Chen-Frid theory of pairings between bounded  vector fields whose divergence lies in some Lebesgue spaces  and gradients of $BV$ functions. Section \ref{secL2} is devoted to present the existence and uniqueness theory of finite energy solutions to problem \eqref{pbintro}  in case of data $f\in L^N(\Omega)$.  
The core of the paper is the content of Section \ref{secL1} in which we prove existence and uniqueness of infinite energy  solutions to \eqref{pbintro} in full generality. In Section \ref{boun} we discuss the existence of finite energy solutions to \eqref{pbintro} when $f$ does not necessarily belong to $L^N(\Omega)$. In particular, if  $f\in L^{N,\infty}(\Omega)$, a smallness assumption is needed to guarantee the boundedness of the solutions; this hypothesis turns out to be sharp as shown by an explicit  example. 

\color{black}

\section{Notation and preparatory tools}  \label{due}

From here on  $\Omega$  will always represent  an open bounded subset of $\R^N$ ($N\ge 2$) with Lipschitz boundary.
We denote by $\mathcal H^{N-1}(E)$ the $(N - 1)$-dimensional Hausdorff measure of a set $E$,  while $|A|$ stands for the $N$-dimensional  Lebesgue measure $\mathcal{L}^N$ of a set $A\subset \rn$. We denote by $\chi_{A}$ the characteristic function of a set $A\subset \rn$.

By $\mathcal{M}(\Omega)$ we indicate the space of Radon measures with finite total variation over $\Omega$ and we will call mutually singular (or mutually orthogonal) two Radon measures $\mu$ and $\nu$ in $\mathcal{M}(\Omega)$ such that there exists a measurable set $A\subset \Omega$ satisfying 
$$
\mu\res A=\nu\res (\Omega\backslash A)=0. 
$$ 

For a fixed $k>0$, we use the truncation functions $T_{k}:\R\to\R$ and $G_{k}:\R\to\R$ defined, respectively,  by
\begin{align*}
T_k(s):=&\max (-k,\min (s,k))\ \ \text{\rm and} \ \ G_k(s):=s- T_k(s).
\end{align*}

If no otherwise specified, we denote by $C$ several positive constants whose value may change from line to line and, sometimes, on the same line. These values will only depend on the data but they will never depend on the indexes of the sequences we will gradually introduce. Let us explicitly mention that we will not relabel an extracted compact subsequence.

For simplicity's sake, and if there is no ambiguity,  we will often use the following notation:
$$
 \int_\Omega f:=\int_\Omega f(x)  dx , 
$$
{and, if $\mu$ is a Radon measure,
\[\int_\Omega f\mu:=\int_\Omega f\, d\mu\,.\]}

Finally, we will denote by ${\rm sgn} (s)$ the multi-valued sign function defined by 
$$
\sgn (s):=\begin{cases}
1 & \text{if }\ s>0\\
[-1,1] & \text{if }\ s=0\\
-1 & \text{if }\ s<0.
\end{cases}
$$

\subsection{$BV$ spaces and the area functional}
We refer to \cite{afp} for a complete account on $BV$-spaces.

Let
$$BV(\Omega):=\{ u\in L^1(\Omega) : Du \in \mathcal{M}(\Omega)^N \}.$$
By $Du \in \mathcal{M}(\Omega)^N$ we mean that  each distributional partial derivative of $u$ is a Radon measure with finite total variation. Then the total variation of $D u$ is given by
$$\displaystyle |D u| = \sup\left\{\int_\Omega u\sum_{i=1}^{N}  \operatorname{\frac{\partial \phi_i}{\partial  x_i}}, \ \phi_i \in C^1_0(\Omega, \mathbb{R}), \ |\phi_i|\le 1, \forall i=1,...,N\right\}.$$

Each $u\in BV(\Omega)$ exhibits a trace on $\partial\Omega$ which belongs to $L^1(\partial\Omega)$. Henceforth we will use the same notation for a $BV$--function and its trace.

We underline that $BV(\Omega)$ endowed with the norm
$$\displaystyle ||u||_{BV(\Omega)}=\int_{\partial\Omega}
|u|\, d\mathcal H^{N-1}+ \int_\Omega|Du|,$$
is a Banach space.

A Radon measure $\mu$ can be uniquely decomposed as
$\mu=\mu^a+\mu^s$
where $\mu^a$ is absolutely continuous with respect to the Lebesgue measure $\mathcal{L}^N$ while $\mu^s$ is concentrated on a set of zero Lebesgue measure, i.e. $\mu^a$ and $\mu^s$ are mutually singular.

\medskip

If $u\in BV(\Omega)$ the measure $\sqrt{1+|Du|^2}$ is defined as
$$\displaystyle  \sqrt{1+ | D u|^2} (E)= \sup\left\{\int_E \phi_{N+1} - \int_E u\sum_{i=1}^{N}  \operatorname{\frac{\partial \phi_i}{\partial x_i}}, \ \phi_i \in C^1_0(\Omega, \mathbb{R}), \ |\phi_i|\le 1, \forall i =1,...,N+1\right\}\,,$$
for any Borel set $E\subseteq \Omega$.
The notation
$$\int_\Omega \sqrt{1+ | D u|^2}$$
stands for  the total variation of the  $\mathbb{R}^{N+1}$-valued measure which formally represents $(\mathcal{L}^N, Du)$. Notice that, if $u$ is smooth,  then
$$|(\mathcal{L}^N, \nabla u)| (\Omega)=\int_\Omega \sqrt{1+ |\nabla u|^2}$$
gives the area of the graph of $u$.
 Let us \bk also observe that it follows from the decomposition in absolutely continuous and singular part with respect to the Lebesgue measure that one has
\begin{equation*}
	\sqrt{1+ | D u|^2}= \sqrt{1+ | D^a u|^2}\mathcal{L}^N+|D^s u| \,,
\end{equation*}
 where we use the following notations $D^a u:=(D u)^a$ and $D^s u:=(D u)^s$. \bk

In what follows we will use the following semicontinuity classical results; firstly, the  functional $$J_1(v)=\int_\Omega \sqrt{1+ | D v|^2}\varphi + \int_{\partial\Omega} |v|\varphi \, d\mathcal H^{N-1} , \ \ \text{for all} \ 0\le \varphi \in C^1(\overline{\Omega})$$
is lower semicontinuous in $BV(\Omega)$ with respect to the $L^1(\Omega)$ convergence. On the other hand  the  functional \begin{equation}\label{j2} J_2(v) = \int_\Omega \sqrt{1-|v|^2}\varphi \ \ \text{for all} \ 0\le \varphi \in C^1(\overline{\Omega})\end{equation} defined on functions  $|v|\leq 1$ is weakly upper semicontinuous in $L^1(\Omega)$  (see Corollary $3.9$ of \cite{brezis}).\bk

\subsection{The Anzellotti-Chen-Frid theory} Let us briefly present the  $L^\infty$-vector fields  theory due to \cite{An} and \cite{CF} in the case   of bounded fields $z$ whose divergence is in $L^q(\Omega)$.

Let $q\ge 1$ and 
$$X(\Omega)_q:=\{ z\in L^\infty(\Omega)^N : \operatorname{div}z \in L^q(\Omega) \}.$$

In \cite{An}, under suitable compatibility conditions that we shall outline later,  given a function $v\in BV(\Omega)$ and a bounded vector field $z\in X(\Omega)_q$, the following distribution $(z,Dv): C^1_c(\Omega)\to \mathbb{R}$ is defined:
\begin{equation}\label{dist1}
\langle(z,Dv),\varphi\rangle:=-\int_\Omega v\varphi\operatorname{div}z-\int_\Omega
vz\cdot\nabla\varphi,\quad \varphi\in C_c^1(\Omega)\,.
\end{equation}

Let us stress that  \eqref{dist1} is well defined provided one of the following {\it compatibility conditions} hold:  
\begin{equation}\label{cc1} v \in BV(\Omega)\  \text{and}\ \  z\in X(\Omega)_N\,,\end{equation} 

or  
\begin{equation}\label{cc3} v \in BV(\Omega)\cap L^\infty(\Omega)\  \text{and}\  z\in X(\Omega)_1\,.\end{equation} 

We point out that an admissible compatibility condition is also  $v \in BV(\Omega)$ and 
$\operatorname{div}z\in L^{N,\infty}(\Omega)$, where $L^{N,\infty}(\Omega)$ is the usual Lorentz space (see \cite{PKF} for an introduction on such function spaces) also known as Marcinkiewicz space of exponent $N$. \bk

\medskip 
Moreover, it holds
\begin{equation*}\label{finitetotal}
|\langle   (z, Dv), \varphi\rangle| \le ||\varphi||_{L^{\infty}(U) } ||z||_{L^\infty(U)^N} \int_{U} |Dv|\,,
\end{equation*}
for all open set $U \subset\subset \Omega$ and for all $\varphi\in C_c^1(U)$, and 
\begin{equation*}\label{finitetotal1}
\left| \int_B (z, Dv) \right|  \le  \int_B \left|(z, Dv)\right| \le  ||z||_{L^\infty(U)^N} \int_{B} |Dv|\,,
\end{equation*}
for all Borel sets $B$ and for all open sets $U$ such that $B\subset U  \subset \subset \Omega$.
Every $z \in X(\Omega)_q$ has a weak trace on $\partial \Omega$ of its normal component which is denoted by
$[z, \nu]$, where $\nu(x)$ is the outward normal unit vector defined for $\mathcal H^{N-1}$-almost every $x\in\partial\Omega$ (see  \cite{An}), such that
\begin{equation*}\label{des1}
||[z,\nu]||_{L^\infty(\partial\Omega)}\le ||z||_{L^\infty(\Omega)^N}\,.
\end{equation*}

The following Green formula holds (see \cite[Theorem 1.9]{An}):
\begin{lemma}\label{21}
Let $z \in L^{\infty}(\Omega)^N$ and $v\in BV(\Omega)$, then it holds
	\begin{equation}\label{green}
		\int_{\Omega} v \operatorname{div}z + \int_{\Omega} (z, Dv) = \int_{\partial \Omega} v[z, \nu] \ d\mathcal H^{N-1}\,,
	\end{equation}
	provided one of the compatibility conditions \eqref{cc1}  or \bk \eqref{cc3} is in force. 
\end{lemma}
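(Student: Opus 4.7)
The plan is to establish \eqref{green} by smooth approximation of $v$ followed by passage to the limit in the classical divergence theorem.

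First, I would observe that the distribution $(z, Dv)$ defined by \eqref{dist1} is actually represented by a finite Radon measure on $\Omega$: the bound $|\langle (z,Dv), \varphi \rangle| \leq \|\varphi\|_{L^\infty} \|z\|_{L^\infty} |Dv|(\operatorname{supp}\varphi)$ stated right after \eqref{dist1} extends the pairing by density to $C_c(\Omega)$, and Riesz's representation theorem then yields a Radon measure absolutely continuous with respect to $|Dv|$, in particular of finite total variation.

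Then I would invoke the Anzellotti--Giaquinta approximation of $BV$-functions on Lipschitz domains to produce $v_n \in C^\infty(\Omega) \cap W^{1,1}(\Omega)$ with $v_n \to v$ strictly in $BV(\Omega)$ (i.e.\ $v_n \to v$ in $L^1(\Omega)$ with $\int_\Omega |\nabla v_n|\to \int_\Omega |Dv|$) and preserving the trace on $\partial\Omega$; under \eqref{cc3} one additionally truncates to ensure $\|v_n\|_{L^\infty} \leq \|v\|_{L^\infty}$. For each such smooth $v_n$, the classical Gauss--Green formula reads
\[
\int_\Omega v_n \operatorname{div} z + \int_\Omega z \cdot \nabla v_n = \int_{\partial\Omega} v \, [z, \nu] \, d\mathcal{H}^{N-1},
\]
where the boundary term is already independent of $n$. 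The first integral passes to $\int_\Omega v \operatorname{div} z$: under \eqref{cc1} by weak convergence $v_n \rightharpoonup v$ in $L^{N/(N-1)}(\Omega)$ (which follows from boundedness in $BV$, the embedding $BV\hookrightarrow L^{N/(N-1)}$ and $L^1$-convergence) tested against $\operatorname{div} z \in L^N(\Omega)$; under \eqref{cc3} by dominated convergence using the uniform $L^\infty$-bound and $\operatorname{div} z \in L^1(\Omega)$. Hence $\int_\Omega z \cdot \nabla v_n$ converges to some $L \in \mathbb{R}$.

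The main obstacle will be to identify $L$ with $\int_\Omega (z, Dv)$. For any $\varphi \in C_c^1(\Omega)$, the smooth Leibniz rule applied to $v_n \varphi z$ combined with \eqref{dist1} gives
\[
\int_\Omega \varphi\, z \cdot \nabla v_n \longrightarrow -\int_\Omega v \varphi \operatorname{div} z - \int_\Omega v \, z \cdot \nabla \varphi = \langle (z, Dv), \varphi \rangle,
\]
so the signed measures $z \cdot \nabla v_n \, \mathcal{L}^N$ converge to $(z, Dv)$ in the sense of distributions. To extend the convergence to the constant test function $1$, I would exploit that $|Dv|$ is a finite measure on $\Omega$: given $\varepsilon > 0$ choose $K \Subset \Omega$ with $|Dv|(\Omega\setminus K) < \varepsilon$ and pick $\varphi \in C_c^1(\Omega)$ with $\varphi\equiv 1$ on $K$; strict convergence in $BV$ then forces $\int_{\Omega\setminus K} |\nabla v_n| < 2\varepsilon$ for $n$ large, which controls the remainder $\int_\Omega (1-\varphi) z \cdot \nabla v_n$ uniformly in $n$. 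Letting $\varepsilon \to 0$ yields $L = \int_\Omega (z, Dv)$, which combined with the preceding convergences proves \eqref{green}.
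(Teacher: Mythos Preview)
The paper does not supply its own proof of this lemma; it simply quotes \cite[Theorem~1.9]{An}. Your argument is essentially the one Anzellotti gives there: approximate $v$ in the strict $BV$ topology by smooth (or $W^{1,1}$) functions sharing the same trace, invoke the Gauss--Green identity for those, and pass to the limit via a tightness argument. The proof is correct.

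Two small remarks. First, the ``classical Gauss--Green formula'' you apply to $v_n$ is not classical, since $z$ is merely in $X(\Omega)_q$; rather, that identity for $v_n\in W^{1,1}(\Omega)\cap C^\infty(\Omega)$ is exactly how the weak normal trace $[z,\nu]$ is \emph{constructed} in \cite{An}, so you are using the definition, not an independent theorem. Second, in the final step the quantity you actually need to make small is $\int_\Omega (1-\varphi)|\nabla v_n|$, not $\int_{\Omega\setminus K}|\nabla v_n|$ (the latter need not be small, by lower semicontinuity on the open set $\Omega\setminus K$). The former is handled cleanly: lower semicontinuity of $w\mapsto\int_\Omega\varphi\,|Dw|$ with respect to $L^1$ convergence, combined with $|Dv_n|(\Omega)\to|Dv|(\Omega)$, gives
\[
\limsup_{n}\int_\Omega (1-\varphi)|\nabla v_n|\le \int_\Omega (1-\varphi)\,d|Dv|<\varepsilon,
\]
which is exactly what your remainder estimate requires.
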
	
Let us recall the following technical result due again to \cite[Theorem 2.4]{An}. \begin{lemma}\label{lemanzas}
Let $z \in L^{\infty}(\Omega)^N$ and $v\in BV(\Omega)$, then it holds
$$(z, D u)^a = z \cdot D^a u.$$
provided one of the compatibility conditions \eqref{cc1}  or \bk \eqref{cc3} is in force. 

\end{lemma}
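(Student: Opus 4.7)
The plan is to reduce the statement to the smooth case via an Anzellotti--Giaquinta type approximation and then to identify the absolutely continuous part of the pairing measure by a pointwise differentiation argument.

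First, I would observe that, under either compatibility condition \eqref{cc1} or \eqref{cc3}, the distribution $(z, Dv)$ defined by \eqref{dist1} extends to a Radon measure on $\Omega$ which is absolutely continuous with respect to $|Dv|$, with Radon--Nikodym density bounded in modulus by $\|z\|_{L^\infty(\Omega)^N}$; this is essentially the content of the estimates recalled right after Lemma \ref{21}. This already grants the Lebesgue decomposition $(z, Dv) = (z, Dv)^a + (z, Dv)^s$, and, thanks to the mutual orthogonality of $D^a v$ and $D^s v$, the absolutely continuous part is also absolutely continuous with respect to $|D^a v|$, while the singular part is concentrated where $|D^s v|$ lives.

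The second step is the smooth approximation. I would pick a sequence $v_n \in C^\infty(\Omega) \cap W^{1,1}(\Omega)$ with $v_n \to v$ in $L^1(\Omega)$ and $\int_\Omega \sqrt{1+|\nabla v_n|^2} \to \int_\Omega \sqrt{1+|Dv|^2}$ (area-strict convergence, via the classical construction in \cite{afp}). For such smooth $v_n$, a direct integration by parts starting from \eqref{dist1} gives the identity $(z, Dv_n) = z\cdot \nabla v_n\, dx$ as measures on $\Omega$.

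The third step consists in passing to the limit. The continuity of the Anzellotti pairing with respect to area-strict convergence yields $(z, Dv_n) \rightharpoonup^* (z, Dv)$ weakly as Radon measures on $\Omega$. On the other hand, $\nabla v_n\, dx$ converges weakly-$*$ to the \emph{whole} measure $Dv$, not merely to $D^a v$, so the absolutely continuous part of the limit of $z\cdot \nabla v_n\, dx$ is not obvious. To extract it, I would localise at Lebesgue points $x_0$ of $z$ which are simultaneously points of approximate differentiability of $v$ (these form a set of full Lebesgue measure in $\Omega$). A blow-up argument then shows that $\frac{(z, Dv)(B_r(x_0))}{|B_r(x_0)|} \to z(x_0)\cdot \nabla v(x_0)$ as $r\to 0^+$, and the Lebesgue--Besicovitch differentiation theorem allows to conclude $(z, Dv)^a = z\cdot D^a v$.

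The main obstacle, as already hinted, is precisely the separation of absolutely continuous and singular contributions in the limit: the mollification mixes them, and concentration phenomena may a priori produce a spurious singular term in the limit of $z\cdot\nabla v_n\, dx$. The blow-up/differentiation argument sidesteps this by working almost everywhere pointwise rather than at the level of sequences, exploiting that $z$ is essentially constant and $v$ is essentially affine at a full-measure set of base points.
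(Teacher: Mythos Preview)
The paper does not prove this lemma; it is simply quoted as \cite[Theorem~2.4]{An}. So there is no in-paper argument to compare against, and your task was really to reproduce (or replace) Anzellotti's proof.

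Your sketch has the right target but a structural disconnect and one genuine gap. Step~2 (area-strict approximation of $v$) produces the identity $(z, Dv_n) = z \cdot \nabla v_n \, dx$, and you correctly observe that passing to the limit recovers the full pairing $(z, Dv)$, not its absolutely continuous part. You then abandon the approximation and switch to a blow-up on $(z, Dv)$ itself---so Step~2 ends up playing no role in the conclusion. More importantly, the blow-up claim
\[
\frac{(z, Dv)(B_r(x_0))}{|B_r(x_0)|}\longrightarrow z(x_0)\cdot D^a v(x_0)
\]
is precisely the statement to be proved, and you do not explain how the Lebesgue-point property of $z$ (an $L^1$-average condition, \emph{not} an $L^\infty$ one) interacts with the pairing, which is defined via integration by parts rather than as a pointwise product. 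The only estimate at your disposal is $|(z-c, Dv)|(B_r)\le \|z-c\|_{L^\infty(B_r)}|Dv|(B_r)$, and the $L^\infty$ oscillation of $z$ on small balls need not vanish at Lebesgue points. As written, the blow-up step is asserted rather than argued.

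A cleaner route is to mollify $z$ rather than $v$. For $z_\epsilon = z*\rho_\epsilon$ one checks directly from \eqref{dist1} that $(z_\epsilon, Dv) = z_\epsilon\cdot Dv$ as measures (since $\varphi z_\epsilon\in C_c^1(\Omega)^N$), whence $(z_\epsilon, Dv)^a = z_\epsilon\cdot D^a v$ and $(z_\epsilon, Dv)^s = z_\epsilon\cdot D^s v$. Now $z_\epsilon\cdot D^a v \to z\cdot D^a v$ in $L^1_{\rm loc}$ by dominated convergence, while the singular parts obey $|z_\epsilon\cdot D^s v| \le \|z\|_{L^\infty(\Omega)^N}|D^s v|$ uniformly in $\epsilon$, so any weak-$*$ accumulation point of them is singular with respect to $\mathcal L^N$. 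Since $(z_\epsilon, Dv) \rightharpoonup^* (z, Dv)$ (immediate from \eqref{dist1} and $\operatorname{div}z_\epsilon\to\operatorname{div}z$ in $L^q_{\rm loc}$), decomposing the limit yields $(z, Dv)^a = z\cdot D^a v$ with no blow-up needed.
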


{\subsection{An algebraic inequality} In what follows we will have to apply an algebraic inequality, which is next set for the sake of completeness.
If $a\ge 0$ and $0\le b\le 1$, then
\begin{equation}\label{ine:1}
  ab\le \sqrt{1+a^2}-\sqrt{1-b^2}.
\end{equation}
To check it, just realize that writing as
\[ab+\sqrt{1-b^2}\le \sqrt{1+a^2}\]
squaring and simplifying, we get \eqref{ine:1} is equivalent to
\[a^2(1-b^2)-2ab\sqrt{1-b^2}+b^2\ge0.\]
This inequality holds since the left-hand side is a square.
As a consequence of \eqref{ine:1} and the Cauchy-Schwarz inequality, we deduce that if $A, B\in\R^N$ with $|B|\le 1$, then
\begin{equation}\label{ine:2}
  A\cdot B\le \sqrt{1+|A|^2}-\sqrt{1-|B|^2}.
\end{equation}
}

\section{$BV$-solutions in presence of $L^N$-data}  \label{secL2}

In this section we deal with the following problem:
\begin{equation}
\label{pb}
\begin{cases}
g(u)\dis -\operatorname{div}\left(\frac{D u}{\sqrt{1+|D u|^2}}\right) = f & \text{in}\;\Omega,\\
u=0 & \text{on}\;\partial\Omega,
\end{cases}
\end{equation}
where $f$ belongs  to $L^N(\Omega)$ and $g$ is a continuous function such that
\begin{equation}\label{condg} \lim_{s\to \pm \infty} g(s) = \pm\infty \ \ \text{and}\  \ g(s)s\geq 0\  \ s\in \re.\end{equation}

\medskip 
Let us start by  specifying  what we mean by a solution to \eqref{pb}.
\begin{defin}
	\label{weakdef} 
	Let $f\in L^N(\Omega)$. A function {$u\in BV(\Omega)$} is a solution to problem \eqref{pb} if there exists $z\in X(\Omega)_N$ with $||z||_{L^\infty(\Omega)^N}\le 1$ such that 
	\begin{align}
		&g(u)-\operatorname{div}z = f \ \ \text{in}\ \ \mathcal{D}'(\Omega), \label{def_distrp=1}
		\\
		&(z,Du)=\sqrt{1+|Du|^2} - \sqrt{1-|z|^2} \label{def_zp=1} \ \ \ \ \text{as measures in } \Omega,
		\\
		&u(\sgn{u} + [z,\nu])(x)=0 \label{def_bordop=1}\ \ \ \text{for  $\mathcal{H}^{N-1}$-a.e. } x \in \partial\Omega.
	\end{align}
\end{defin}

\begin{remark}\label{remdef}
	Let us underline that \eqref{def_zp=1} aims to give an interpretation to $D u/\sqrt{1+|D u|^2}$.
	 Indeed, if $u$ is smooth and $z=\frac{\nabla u}{\sqrt{1+|\nabla u|^2}}$, then one has
	$$
	(z, \nabla u)= z\cdot \nabla u=\frac{|\nabla u|^2}{\sqrt{1+|\nabla u|^2}} \,,
	$$
	which, after a simple manipulation, gives the right-hand of \eqref{def_zp=1}.
	
	It is also worth mentioning that, under the assumptions of Lemma \ref{lemanzas}, \eqref{def_zp=1} turns out to be equivalent to  require that both
	\begin{equation}\label{remdef1}
		z\cdot D^a u = \sqrt{1+ |D^a u|^2} - \sqrt{1-|z|^2}
	\end{equation}
	and
	\begin{equation*} \label{remdef2}
		(z,Du)^s = |D^s u|,
	\end{equation*}
		holds (see \cite{dt}). 
		
		We stress that  condition \eqref{def_zp=1} 
		was leveraged in \cite[Theorem 3.1]{dt} in order to  characterize  the subdifferential of the functional 
		 $$u\mapsto \int_{\Omega}\sqrt{1+|Du|^2}+ \int_{\partial\Omega}|u|\, d\mathcal H^{N-1} -  \into fu ,$$ 
		 associated to \eqref{pb} with $g\equiv0$.

	Let us also stress that, once \eqref{remdef1} is in force, $z$ is uniquely defined by	
	\begin{equation}\label{zesplicito}
		z= \frac{D^a u}{\sqrt{1+|D^a u |^2}}.
	\end{equation}
	 This is a striking  difference with some others  flux-limited diffusion operators as the $1$-laplacian or the transparent media one \cite{ACM, ABCM, GMP}).

	With regard to \eqref{def_bordop=1},  it is nowadays the classical way the Dirichlet datum is meant for these type of equations as, in general, the  trace of the solutions is not attained pointwise. Roughly speaking, it means that  at any  point of $\partial\Omega$ either $u$ is zero  or the modulus of the weak trace of the normal component of $z$ is highest possible at the boundary. 
	
	Let us also observe that it follows from \eqref{def_distrp=1} that $g(u)\in L^N(\Omega)$.

	We conclude by stressing that, by standard embedding in $L^{\frac{N}{N-1}}(\Omega)$,   \eqref{def_distrp=1} also holds tested with functions in $BV(\Omega)$  as much as in $\mathcal{D}(\Omega)$. \bk 
\end{remark}

Let us state the existence result of this section:
\begin{theorem}\label{teomain}
	Let $f\in L^{N}(\Omega)$ and let $g$ satisfy \eqref{condg}. Then there exists a bounded solution $u$ to problem \eqref{pb} in the sense of Definition \ref{weakdef}. \end{theorem}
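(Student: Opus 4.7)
My strategy is a regularization-compactness scheme in three stages: approximate by $p$-Laplacian-type regularization for $p\in(1,2)$, derive uniform a priori estimates including a crucial $L^\infty$-bound, and then pass to the limit reading off the three conditions of Definition \ref{weakdef} via Anzellotti's pairing. Concretely, for each such $p$ take a minimiser $u_p\in W_0^{1,p}(\Omega)$ of the strictly convex, coercive functional
\[
\mathcal{J}_p(v)=\tfrac1p\int_{\Omega}|\nabla v|^p\,dx+\int_{\Omega}\sqrt{1+|\nabla v|^2}\,dx+\int_{\Omega}G(v)\,dx-\int_{\Omega}f v\,dx,
\]
with $G(s):=\int_0^s g(t)\,dt\ge 0$ by \eqref{condg} (truncating $g$ first by $g_n=T_n(g)$ and removing the truncation by a standard diagonal step if necessary). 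Then $u_p$ solves $-\Delta_p u_p-\operatorname{div}(\nabla u_p/\sqrt{1+|\nabla u_p|^2})+g(u_p)=f$, and setting $z_p:=\nabla u_p/\sqrt{1+|\nabla u_p|^2}$ one has $|z_p|\le 1$ and the pointwise \emph{equality} case of \eqref{ine:2}:
\[
z_p\cdot\nabla u_p=\sqrt{1+|\nabla u_p|^2}-\sqrt{1-|z_p|^2}.
\]

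\medskip

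The $L^\infty$-estimate, which is the essential novelty, uses the growth of $g$. Set $\phi(k):=\inf_{|s|\ge k}|g(s)|$; then $\phi(k)\to+\infty$ by \eqref{condg}. Testing with $G_k(u_p)$ and discarding the non-negative diffusive contributions, the sign condition yields
\[
\phi(k)\int_\Omega|G_k(u_p)|\,dx\le\int_\Omega g(u_p)G_k(u_p)\,dx\le\int_\Omega|f|\,|G_k(u_p)|\,dx,
\]
and a Stampacchia-type iteration on the super-level sets $A_k=\{|u_p|>k\}$---combining $f\in L^N(\Omega)$ with the elementary bound $\int|G_k(u_p)|\ge(h-k)|A_h|$ for $h>k$ and the Sobolev-type control of $G_k(u_p)$ recovered from the curvature term---delivers a universal $M=M(\|f\|_N,g)$ with $\|u_p\|_{L^\infty(\Omega)}\le M$, independent of $p$. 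Testing now with $u_p$ itself, using $|\nabla u_p|^2/\sqrt{1+|\nabla u_p|^2}\ge\sqrt{1+|\nabla u_p|^2}-1$ and the $L^\infty$-bound, I obtain the uniform BV-bound $\int_\Omega\sqrt{1+|\nabla u_p|^2}\le C$.

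\medskip

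BV-compactness then yields, along a subsequence, $u_p\to u$ in $L^1(\Omega)$ with $u\in BV(\Omega)\cap L^\infty(\Omega)$, and $z_p\rightharpoonup^\ast z$ in $L^\infty(\Omega)^N$ with $|z|\le 1$; passing to the limit in the distributional identity gives \eqref{def_distrp=1}, which in particular ensures $\operatorname{div}z\in L^N(\Omega)$ and the compatibility condition \eqref{cc1}. For \eqref{def_zp=1}, multiply the pointwise equality for $z_p\cdot\nabla u_p$ by $\varphi\in C^1_c(\Omega)$, $\varphi\ge 0$, and pass to the limit: the left-hand side converges to $\langle(z,Du),\varphi\rangle$ via \eqref{dist1} and \eqref{def_distrp=1}, while lower semicontinuity of $J_1$ and upper semicontinuity of $J_2$ of \eqref{j2} give
\[
\langle(z,Du),\varphi\rangle\ge\int_\Omega\sqrt{1+|Du|^2}\,\varphi-\int_\Omega\sqrt{1-|z|^2}\,\varphi;
\]
the reverse inequality follows pointwise from \eqref{ine:2} on the absolutely continuous part (matched through Lemma \ref{lemanzas}) and from $(z,Du)^s\le|D^s u|$ on the singular one. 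Finally, \eqref{def_bordop=1} is recovered by testing \eqref{def_distrp=1} with $u\in BV(\Omega)\cap L^\infty(\Omega)$, applying Green's formula \eqref{green} together with the pairing identity just proved, and using the bound $\|[z,\nu]\|_{L^\infty(\partial\Omega)}\le 1$.

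\medskip

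The principal obstacle is the pairing identity \eqref{def_zp=1}: simultaneously reconciling the limit of the nonlinear coupling $z_p\cdot\nabla u_p$ on the absolutely continuous part and controlling the singular part of $Du$ is delicate, and it hinges on combining the algebraic inequality \eqref{ine:2}, the semicontinuity of $J_1$ and $J_2$, and Anzellotti's pairing theory. A secondary but also non-trivial point is the uniformity of the $L^\infty$-estimate in $p$, as only the zero-order absorption (not the diffusion, whose flux is bounded) can provide it for $L^N$-data.
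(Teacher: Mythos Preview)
Your overall strategy coincides with the paper's, but there are two genuine gaps. First, your regularizing term $\frac1p\int_\Omega|\nabla v|^p$ yields the Euler--Lagrange contribution $-\Delta_p u_p$ with coefficient $1$, and this term does \emph{not} vanish as $p\to1^+$: from the energy estimate you only get $\int_\Omega|\nabla u_p|^p\le C$, whence $\left|\int_\Omega|\nabla u_p|^{p-2}\nabla u_p\cdot\nabla\varphi\right|\le C^{(p-1)/p}\|\nabla\varphi\|_{L^\infty}|\Omega|^{1/p}$, which tends to $\|\nabla\varphi\|_{L^\infty}|\Omega|$, not to $0$. The paper inserts the factor $(p-1)$ in front of the $p$-Laplacian precisely to force this term to disappear in the limit; without such a vanishing coefficient you cannot pass to \eqref{def_distrp=1}.

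Second, and more importantly, your derivation of the boundary condition \eqref{def_bordop=1} is circular. Testing the \emph{limit} equation \eqref{def_distrp=1} with $u$ and applying Green's formula \eqref{green} and the pairing identity \eqref{def_zp=1} only produces the \emph{identity}
\[
\int_\Omega g(u)u+\int_\Omega\sqrt{1+|Du|^2}-\int_\Omega\sqrt{1-|z|^2}-\int_{\partial\Omega}u[z,\nu]\,d\mathcal H^{N-1}=\int_\Omega fu,
\]
which carries no information about $|u|+u[z,\nu]$ beyond the trivial $|u|+u[z,\nu]\ge0$ coming from $|[z,\nu]|\le1$. The needed reverse inequality $\int_{\partial\Omega}(|u|+u[z,\nu])\le0$ comes from the \emph{approximating} level: one tests with $u_p$ (which has zero Sobolev trace, so the boundary integral $\int_{\partial\Omega}|u_p|$ appears for free), and then passes to the limit using the lower semicontinuity of the functional $J_1$, which crucially includes the boundary term $\int_{\partial\Omega}|v|\,d\mathcal H^{N-1}$. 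This step cannot be bypassed by working only at the limit.
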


\begin{remark}\label{rem:1}
	Let us stress again  that, in absence of the absorption zero order term, existence of $BV$-solutions are expected only for small $f$'s belonging to $L^N(\Omega)$.
	To check that a smallness condition is needed in this case, assume that there exists a solution of problem \eqref{pb} without the absorption term and let $z$ be the associated vector field. Then, for every $v\in W_0^{1,1}(\Omega)$, Green's formula implies
	\[\left|\int_\Omega fv\right|=\left|\int_\Omega z\cdot \nabla v\right|\le \int_\Omega |\nabla v|\,.\]
	Thus $f\in W^{-1,\infty}(\Omega)$, the dual space of $W_0^{1,1}(\Omega)$, and $\| f\|_{W^{-1,\infty}(\Omega)}\le 1$.
	Theorem \ref{teomain} consequently shows that when dealing with the regularizing absorption term one gains that a solution always exists, and it belongs to $BV(\Omega)$, avoiding any small condition on the size of $f$.

\medskip

It is also worth noting that the hypothesis \eqref{condg} is necessary to get existence of a solution for every $f\in L^N(\Omega)$. Indeed, assume that $g$ is bounded from above, that is, there exists $M>0$ such that $g(s)\le M$ for all $s\in\R$ and let $f\in L^N(\Omega)$ with $f\ge0$. If $u$ is a solution to problem \eqref{pb} and define $f_1=f-g(u)$, then 
\[	\begin{cases}
	\dis -\operatorname{div}\left(\frac{D u}{\sqrt{1+|D u|^2}}\right) = f_1 & \text{in}\;\Omega,\\
		u=0 & \text{on}\;\partial\Omega,
	\end{cases}\]
	So, for what we said before,  we infer that $f_1\in W^{-1,\infty}(\Omega)$ and $\| f_1\|_{W^{-1,\infty}(\Omega)}\le1$. To compute $\| f\|_{W^{-1,\infty}(\Omega)}$, consider $v\in W_0^{1,1}(\Omega)$. It follows from $0\le f=f_1+g(u)\le f_1+M$ that 
	\[
	\left|\int_\Omega fv\right|\le \int_\Omega f|v|\le \int_\Omega f_1|v|+M\int_\Omega |v|
	\le \| f_1\|_{W^{-1,\infty}(\Omega)}\int_\Omega\big|\nabla v\big|+M|\Omega|^{1/N}\| v\|_{L^{\frac{N}{N-1}}(\Omega)}
	\]
	Appealing now to Sobolev's inequality, we obtain
	\[\left|\int_\Omega fv\right|\le\big(1+M|\Omega|^{1/N}\mathcal{S}_1\big)\int_\Omega|\nabla v|\]
	and consequently $\| f\|_{W^{-1,\infty}(\Omega)}\le 1+M|\Omega|^{1/N}\mathcal{S}_1$. Therefore, the datum $f$ cannot be arbitrary.
	
	An analogous argument can be developed assuming that  $g$ is bounded from below.
\end{remark}

By appealing  to the presence of the regularizing zero order term, we show that the $BV$-solution of \eqref{pb} is unique provided $g$ is increasing. 
\begin{theorem}\label{teomainuniqueL2}
	Let $g$ be an increasing function. Then there is at most one solution to problem \eqref{pb} in the sense of Definition \ref{weakdef}.
\end{theorem}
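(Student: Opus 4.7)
The plan is to subtract the two distributional equations, test against the difference of the solutions via Green's formula, and show each resulting term has a definite sign. Let $u_1,u_2$ be two solutions in the sense of Definition \ref{weakdef} with associated vector fields $z_1,z_2\in X(\Omega)_N$ satisfying $\|z_i\|_{L^\infty(\Omega)^N}\leq 1$. Equation \eqref{def_distrp=1} gives $g(u_1)-g(u_2)=\operatorname{div}(z_1-z_2)$ in $\mathcal{D}'(\Omega)$. Since $u_1-u_2\in BV(\Omega)$ and $z_1-z_2\in X(\Omega)_N$, the compatibility condition \eqref{cc1} applies, so Lemma \ref{21} yields
\begin{equation*}
\int_\Omega (g(u_1)-g(u_2))(u_1-u_2) + \int_\Omega \bigl(z_1-z_2,\, D(u_1-u_2)\bigr) = \int_{\partial\Omega} (u_1-u_2)\,[z_1-z_2,\nu]\, d\mathcal{H}^{N-1}.
\end{equation*}

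The crux is to show that $\bigl(z_1-z_2,\, D(u_1-u_2)\bigr)\geq 0$ as measures. To this end I would first establish the auxiliary measure inequality
\begin{equation*}
(z,Dw)\leq \sqrt{1+|Dw|^2}-\sqrt{1-|z|^2} \quad \text{as measures on } \Omega,
\end{equation*}
valid whenever $z\in X(\Omega)_N$ with $|z|\le 1$ and $w\in BV(\Omega)$: on the absolutely continuous part this follows from Lemma \ref{lemanzas} and the algebraic inequality \eqref{ine:2} applied pointwise to the densities, while on the singular part it reduces to the standard bound $|(z,Dw)^s|\leq |D^sw|$ together with the observation that $\sqrt{1-|z|^2}$ has no singular contribution. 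Expanding the pairing $\bigl(z_1-z_2,\, D(u_1-u_2)\bigr)$ into four terms, using this inequality for the cross terms $(z_1,Du_2)$, $(z_2,Du_1)$ and the equality \eqref{def_zp=1} for the diagonal ones $(z_i,Du_i)$, the four contributions involving $\sqrt{1+|Du_i|^2}$ and $\sqrt{1-|z_i|^2}$ cancel exactly, giving the claimed nonnegativity.

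The boundary integral is in turn nonpositive thanks to \eqref{def_bordop=1}: a short case analysis on the signs of the traces of $u_1,u_2$ (recalling that $[z_i,\nu]\in -\sgn(u_i)$ whenever $u_i\ne 0$, and $|[z_i,\nu]|\le 1$ when $u_i=0$) shows that $(u_1-u_2)\bigl([z_1,\nu]-[z_2,\nu]\bigr)\le 0$ pointwise $\mathcal{H}^{N-1}$-a.e.\ on $\partial\Omega$. Combined with the monotonicity term $(g(u_1)-g(u_2))(u_1-u_2)\ge 0$, the Green identity forces all three integrals to vanish; the strict monotonicity of $g$ then yields $u_1=u_2$ a.e.\ in $\Omega$.

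The main obstacle I anticipate is the clean derivation of the measure inequality $(z,Dw)\leq \sqrt{1+|Dw|^2}-\sqrt{1-|z|^2}$: one has to treat the absolutely continuous and singular parts of the Anzellotti pairing separately and verify that the algebraic bound \eqref{ine:2} lifts correctly to the pointwise Radon–Nikodym densities. The remaining ingredients — Green's formula, the boundary case analysis, and the cancellation of the four square-root terms in the cross/diagonal expansion — are essentially bookkeeping once this measure inequality is in place.
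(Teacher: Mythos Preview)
Your proposal is correct and follows essentially the same route as the paper's proof: test the difference of the equations with $u_1-u_2$, apply Green's formula, use \eqref{def_zp=1} for the diagonal pairings $(z_i,Du_i)$ and the measure inequality $(z,Dw)\le\sqrt{1+|Dw|^2}-\sqrt{1-|z|^2}$ (obtained by splitting into absolutely continuous and singular parts via Lemma \ref{lemanzas} and \eqref{ine:2}) for the cross pairings, then control the boundary terms through \eqref{def_bordop=1}. The paper's presentation differs only cosmetically---it writes out the four pairing terms and the boundary rearrangement $-(u_1-u_2)[z_1,\nu]+(u_1-u_2)[z_2,\nu]=(|u_1|+u_1[z_2,\nu])+(|u_2|+u_2[z_1,\nu])\ge0$ explicitly rather than packaging the cross-term bound as an auxiliary lemma.
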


\subsection{Existence of finite energy solutions}

In order to prove Theorem \ref{teomain}, for $p>1$, we consider the following scheme of approximation:
\begin{equation}
	\label{pbp}
	\begin{cases}
		g_p(u_p)\dis \dis -\operatorname{div}\left(\frac{\nabla u_p}{\sqrt{1+|\nabla  u_p|^2}}\right) - (p-1)\operatorname{div}\left(|\nabla u_p|^{p-2}\nabla u_p\right)  = f_p & \text{in}\;\Omega,\\
		u_p=0 & \text{on}\;\partial\Omega,
	\end{cases}
\end{equation}
where $g_p(s)=T_{\frac{1}{p-1}}(g(s))$ for any $s\in \mathbb{R}$,  and $f_p= T_{\frac{1}{p-1}}(f)$. The existence of $u_p \in W^{1,p}_0(\Omega)\cap L^\infty(\Omega)$ such that
\begin{equation}\label{pbpw}
\int_\Omega g_p(u_p)v + \into \frac{\nabla u_p}{\sqrt{1+|\nabla  u_p|^2}}\cdot \nabla v + (p-1)\into |\nabla u_p|^{p-2}\nabla u_p\cdot \nabla v = \into f_pv\,, \ \ \forall v\in W^{1,p}_0(\Omega)
\end{equation}
follows by  standard monotonicity arguments (\cite{ll}).

\medskip 

We start proving that $u_p$ is bounded uniformly with respect to $p$ by appealing to an idea in \cite{DS}.

\begin{lemma}\label{lem_bounded}
	Let $f\in L^N(\Omega)$, let $g$ satisfy \eqref{condg},  and let $u_p$ be a solution to \eqref{pbp}. Then 
	\begin{equation}\label{bounded}
	\|u_p\|_{L^{\infty}(\Omega)}\leq C\,
	\end{equation}
	for some  positive constant $C$ not depending on $p$. 
\end{lemma}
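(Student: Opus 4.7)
The plan is to combine a Stampacchia level-set iteration with the coercivity at infinity of the absorption term. Since $g(s)\to\pm\infty$ as $s\to\pm\infty$, for any $L>0$ we may pick $k_L>0$ so that $|g(s)|\ge L$ on $\{|s|\ge k_L\}$. Provided $p$ is close enough to $1$ that $\frac{1}{p-1}\ge L$, the same bound holds for $g_p=T_{1/(p-1)}\circ g$, while the sign condition $g_p(s)s\ge 0$ is preserved for all $s$; this restriction on $p$ is innocuous since the estimate will only be used as $p\to 1^+$.

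For $k\ge k_L$ I would test \eqref{pbpw} with $v=G_k(u_p)\in W_0^{1,p}(\Omega)$. Setting $A_k:=\{|u_p|>k\}$, discarding the non-negative $p$-Laplacian term, bounding the absorption integrand below by $L|G_k(u_p)|$, and using the elementary inequality $\tfrac{t^2}{\sqrt{1+t^2}}\ge t-1$ (valid for $t\ge 0$) for the curvature term, one obtains
\[
L\int_{A_k}|G_k(u_p)|+\int_{A_k}|\nabla u_p|\le |A_k|+\int_{A_k}|f|\,|G_k(u_p)|.
\]

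To handle the $L^N$-datum, I would split $f=f\chi_{\{|f|\le R\}}+f\chi_{\{|f|>R\}}$ for a parameter $R>0$ and set $\varepsilon(R):=\|f\chi_{\{|f|>R\}}\|_{L^N(\Omega)}$, which tends to $0$ as $R\to\infty$ by absolute continuity. Using H\"older and the Sobolev inequality $\|G_k(u_p)\|_{L^{N/(N-1)}(\Omega)}\le \mathcal{S}\int_{A_k}|\nabla u_p|$ (available since $G_k(u_p)\in W_0^{1,1}(\Omega)$ for $p>1$), I choose $R$ so large that $\mathcal{S}\varepsilon(R)\le \tfrac{1}{2}$ and then set $L:=2R$, with $p\in(1,1+\tfrac{1}{2R})$. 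Absorbing the Sobolev term on the right, the previous estimate reduces to $\tfrac{1}{2}\int_{A_k}|\nabla u_p|\le |A_k|$. A further application of BV-Sobolev then yields
\[
(h-k)|A_h|^{(N-1)/N}\le\|G_k(u_p)\|_{L^{N/(N-1)}(\Omega)}\le 2\mathcal{S}\,|A_k|,\qquad h>k\ge k_L,
\]
i.e.\ $|A_h|\le C\,|A_k|^{N/(N-1)}/(h-k)^{N/(N-1)}$ with exponent $N/(N-1)>1$. Since $|A_{k_L}|\le|\Omega|$, the classical Stampacchia iteration lemma concludes $|A_{k_L+d}|=0$ for an explicit $d$ depending only on $N$, $\mathcal{S}$, $R$ and $|\Omega|$, proving $\|u_p\|_{L^\infty(\Omega)}\le k_L+d$ uniformly in $p$.

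The main technical obstacle is that the mean curvature operator has only linear growth and thus cannot, on its own, generate the super-linear Stampacchia recursion. The absorption term is what unlocks the argument: it supplies the coercive lower bound $L|G_k(u_p)|$ on large level sets, which lets us take the splitting level $R$ for the datum proportional to $L$ and make the $L^N$-tail of $f$ small enough to be absorbed via Sobolev, once the trade $t^2/\sqrt{1+t^2}\ge t-1$ has converted the curvature term into an ordinary $W^{1,1}$-seminorm.
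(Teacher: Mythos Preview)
Your argument is correct and follows essentially the same route as the paper: test \eqref{pbpw} with $G_k(u_p)$, convert the curvature term into a $W^{1,1}$-seminorm via $t^2/\sqrt{1+t^2}\ge t-1$, split $f$ at a fixed level, absorb the $L^N$-tail through Sobolev and the bounded part through the absorption term (using $|g_p|\ge L$ on large level sets for $p$ close to $1$), and conclude by the Stampacchia iteration. The only cosmetic differences are your explicit choices $\mathcal S\varepsilon(R)\le\tfrac12$ and $L=2R$, where the paper merely requires $\|f\chi_{\{|f|>h\}}\|_{L^N}\mathcal S_1<1$ and $\inf_{|s|\ge k}|g_p(s)|\ge h$.
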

\begin{proof}
	We take $G_k(u_p)$ where $k>0$ as a test function in \eqref{pbpw}, yielding to
	\begin{equation}\label{bounded1}
		\int_\Omega g_p(u_p)G_k(u_p) + \into \frac{|\nabla G_k(u_p)|^2}{\sqrt{1+|\nabla  G_k(u_p)|^2}} \le \int_\Omega f_p G_k(u_p).
	\end{equation}	
	
	For the first term on the left-hand of \eqref{bounded1} we have
	\begin{equation}\label{bounded2tris}
		\inf_{|s|\in [k,\infty)} |g_p(s)| \int_\Omega |G_k(u_p)| \le  \int_\Omega g_p(u_p)G_k(u_p),
	\end{equation}
		while for the second term on the left-hand of \eqref{bounded1} one has
	\begin{equation}\label{bounded2bis}
		\begin{aligned}
		 \int_\Omega \frac{|\nabla G_k(u_p)|^2}{\sqrt{1+|\nabla  G_k(u_p)|^2}} &= \int_{\{|u_p|>k\}} \sqrt{1+|\nabla  G_k(u_p)|^2} - \int_{\{|u_p|>k\}} \frac{1}{\sqrt{1+|\nabla  G_k(u_p)|^2}} 
		 \\
		 &\ge \int_\Omega |\nabla  G_k(u_p)| - |\{|u_p|>k\}|.
		 \end{aligned}
	\end{equation}
	
	For the right-hand of \eqref{bounded1} we write
	\begin{equation}\label{bounded2}
		\begin{aligned}
			\int_\Omega f_p G_k(u_p) & \le  \int_{\{|f|\le h\}} |f| |G_k(u_p)| + \int_{\{|f|> h\}}|f| |G_k(u_p)|
			\\
			&\le h\int_\Omega |G_k(u_p)| + \|f\chi_{\{|f|> h\}}\|_{L^N(\Omega)} \left(\int_{\Omega} |G_k(u_p)|^{\frac{N}{N-1}}\right)^{\frac{N-1}{N}}
			\\
			&\le h\int_\Omega |G_k(u_p)| + \|f\chi_{\{|f|> h\}}\|_{L^N(\Omega)} \mathcal{S}_1 \int_{\Omega} |\nabla G_k(u_p)|,
		\end{aligned}
	\end{equation}	
	after applications of the H\"older and Sobolev inequalities (here $\mathcal{S}_1$ is the best constant in the Sobolev inequality for functions in $W^{1,1}_0(\Omega)$) \bk  and $h>0$ to be chosen. 

	Now we gather \eqref{bounded2tris}, \eqref{bounded2bis} and \eqref{bounded2} into \eqref{bounded1}, obtaining that
	
	\begin{equation}\label{bounded2quat}
		\begin{aligned}
		&\inf_{|s|\in [k,\infty)} |g_p(s)| \int_\Omega |G_k(u_p)| +  \int_\Omega |\nabla G_k(u_p)| 
		\\
		&\le h\int_\Omega |G_k(u_p)| + \|f\chi_{\{|f|> h\}}\|_{L^N(\Omega)} \mathcal{S}_1 \int_{\Omega} |\nabla G_k(u_p)| +|\{|u_p|>k\}|.
		\end{aligned}
	\end{equation}
	
	Now  we fix $h$ large enough in order to have
\begin{equation}\label{hgrande}	
	\|f\chi_{\{|f|> h\}}\|_{L^N(\Omega)} \mathcal{S}_1<1\,.
\end{equation}
Choosing $p_0>1$ such that $\frac1{p_0-1}>h$ we can pick  $\overline{k}$ such that 
	$$\inf_{|s|\in[k,\infty)}|g_p(s)|\ge h, \ \ \forall k\geq \overline{k}, $$
	for any $1<p<p_0$.
 From now on, we only consider those $p$ satisfying $1<p<p_0$.

	This allows to deduce from \eqref{bounded2quat} that it holds
	\begin{equation*}\label{bounded3}
		 \int_\Omega |\nabla G_k(u_p)| \le \frac{|\{|u_p|>k\}|}{1-\|f\chi_{\{|f|> h\}}\|_{L^N(\Omega)}\mathcal{S}_1},\ \  \ \ \forall k\geq \overline{k}.
	\end{equation*}

	An application of the Sobolev inequality gives that ($\ell>k$)
	\begin{equation*}\label{bounded7}
		|\ell-k| |\{|u_p|> \ell\}|^{\frac{N-1}{N}}\le \left(\int_\Omega |G_k(u_p)|^{\frac{N}{N-1}} \right)^{\frac{N-1}{N}}  \le \frac{\mathcal{S}_1|\{|u_p|>k\}|}{1-\|f\chi_{\{|f|> h\}}\|_{L^N(\Omega)}\mathcal{S}_1},  \ \  \ \ \forall \ell>k\geq \overline{k},
	\end{equation*}
	which is
	\begin{equation}\label{bounded8}
		|\{|u_p|> \ell\}|\le \frac{\mathcal{S}_1^{\frac{N}{N-1}}|\{|u_p|> k\}|^{\frac{N}{N-1}}}{\left(\left(1-\|f\chi_{\{|f|> k\}}\|_{L^N(\Omega)} \mathcal{S}_1\right) |\ell-k|\right)^{\frac{N}{N-1}}} \ \  \ \ \forall \ell>k\geq \overline{k}.
	\end{equation}
	Estimate \eqref{bounded8} is sufficient in order  to apply standard Stampacchia machinery (see \cite{st})  to deduce that $u_p$ is uniformly bounded with respect to $p$. The proof is concluded.
\end{proof}

The previous lemma easily allows to show a $BV$-estimate for $u_p$.
 
\begin{lemma}\label{lemma_stimeLN}
	Let $f\in L^N(\Omega)$ and let $g$ satisfy \eqref{condg}. Let $u_p$ be a solution to \eqref{pbp}. Then $u_p$ is unifomly  bounded in $BV(\Omega)$ (with respect to $p$), and  it also holds
	\begin{equation}\label{stimatermineenergia}
		(p-1)\int_\Omega |\nabla u_p|^p\le C,
	\end{equation}
	for some constant $C$ independent of $p$.
\end{lemma}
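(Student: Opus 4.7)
The natural test function for an estimate of the full gradient norm of $u_p$ is $u_p$ itself. Since $u_p\in W^{1,p}_0(\Omega)\cap L^\infty(\Omega)$, we can plug it into the weak formulation \eqref{pbpw} to obtain
\begin{equation*}
\int_\Omega g_p(u_p)u_p+\int_\Omega\frac{|\nabla u_p|^2}{\sqrt{1+|\nabla u_p|^2}}+(p-1)\int_\Omega|\nabla u_p|^p=\int_\Omega f_p\,u_p.
\end{equation*}
Each term will be handled separately.

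First, the absorption term is nonnegative: by \eqref{condg} one has $g(s)s\ge 0$, and since $g_p=T_{1/(p-1)}\circ g$ preserves this sign property, $g_p(u_p)u_p\ge 0$, so this term can simply be dropped. Second, for the curvature-type term I will use the pointwise identity
\begin{equation*}
\frac{|\nabla u_p|^2}{\sqrt{1+|\nabla u_p|^2}}=\sqrt{1+|\nabla u_p|^2}-\frac{1}{\sqrt{1+|\nabla u_p|^2}}\ge |\nabla u_p|-1,
\end{equation*}
which after integration yields $\int_\Omega|\nabla u_p|\le \int_\Omega\tfrac{|\nabla u_p|^2}{\sqrt{1+|\nabla u_p|^2}}+|\Omega|$. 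Third, the right-hand side is controlled by combining $|f_p|\le |f|$, H\"older's inequality (recall $L^N(\Omega)\hookrightarrow L^1(\Omega)$ since $\Omega$ is bounded), and the uniform $L^\infty$ bound \eqref{bounded} of Lemma \ref{lem_bounded}:
\begin{equation*}
\Big|\int_\Omega f_p\,u_p\Big|\le \|f\|_{L^1(\Omega)}\|u_p\|_{L^\infty(\Omega)}\le C.
\end{equation*}

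Collecting these three estimates, the identity above gives at once
\begin{equation*}
\int_\Omega|\nabla u_p|+(p-1)\int_\Omega|\nabla u_p|^p\le C+|\Omega|,
\end{equation*}
which proves \eqref{stimatermineenergia} and shows that $\int_\Omega|\nabla u_p|$ is uniformly bounded. Since $u_p\in W^{1,p}_0(\Omega)$ has zero trace, the $BV(\Omega)$ norm coincides with $\int_\Omega|\nabla u_p|$, and the uniform $BV$-bound follows.

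The only genuinely nontrivial ingredient is the $L^\infty$-bound from the preceding lemma: without it the right-hand side $\int_\Omega f_p u_p$ could not be controlled independently of $p$ (indeed, for merely $L^1$ data one cannot expect a $BV$ estimate this cheaply, which is precisely why Section \ref{secL1} will need a different approximation strategy). Everything else here is routine manipulation of the two terms on the left-hand side.
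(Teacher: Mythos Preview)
Your proof is correct and follows essentially the same route as the paper: test with $u_p$, drop the nonnegative absorption term via \eqref{condg}, rewrite the curvature term using the identity $\frac{|\nabla u_p|^2}{\sqrt{1+|\nabla u_p|^2}}=\sqrt{1+|\nabla u_p|^2}-\frac{1}{\sqrt{1+|\nabla u_p|^2}}\ge |\nabla u_p|-1$, and bound the right-hand side via the uniform $L^\infty$ estimate of Lemma~\ref{lem_bounded}. Your explicit remarks that $g_p=T_{1/(p-1)}\circ g$ preserves the sign condition and that the zero trace of $u_p$ makes the $BV$-norm reduce to $\int_\Omega|\nabla u_p|$ are details the paper leaves implicit.
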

\begin{proof}
	It is sufficient to pick $v=u_p$ as a test function in \eqref{pbpw} obtaining
	\begin{equation}\begin{aligned}\label{stimeaprioriN1}
			\int_\Omega g_p(u_p)u_p+ \int_\Omega \frac{|\nabla u_p|^2}{\sqrt{1+|\nabla u_p|^2}} +(p-1)\int_\Omega |\nabla u_p|^p =  \int_\Omega f_p u_p  \stackrel{\eqref{bounded}}{\le} C \int_\Omega |f|\,.
	\end{aligned}\end{equation}

Now observe that
	\begin{equation}\label{stimeaprioriN3}
		\int_\Omega \frac{|\nabla u_p|^2}{\sqrt{1+|\nabla u_p|^2}}  = \int_{\Omega} {\sqrt{1+|\nabla u_p|^2}}  - \int_{\Omega} \frac{1}{\sqrt{1+|\nabla u_p|^2}}  \ge \int_\Omega |\nabla u_p| - |\Omega|.
	\end{equation}
Therefore it follows from gathering \eqref{stimeaprioriN3} into \eqref{stimeaprioriN1} that 
$$
		\int_\Omega |\nabla u_p| +(p-1)\int_\Omega |\nabla u_p|^p  \le C,
$$

thanks also to \eqref{condg}. This concludes the proof.
\end{proof}
From Lemmas \ref{lem_bounded} and \ref{lemma_stimeLN} we immediately deduce the following corollary.
\begin{corollary}\label{cor_u}
		Let $f\in L^N(\Omega)$ and let $g$ satisfy \eqref{condg}. Let $u_p$ be a solution to \eqref{pbp}. Then there exists $u\in BV(\Omega)\cap L^\infty(\Omega)$ such that, up to subsequences, $u_p$ strongly converges to $u$ in $L^q(\Omega)$ for every $q<\infty$  as $p\to 1^+$.
\end{corollary}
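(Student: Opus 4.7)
The plan is to combine the two a priori estimates just established and then invoke the standard compactness of $BV$.

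First, by Lemma \ref{lemma_stimeLN} the sequence $\{u_p\}$ is bounded in $BV(\Omega)$ uniformly in $p$ (for $p$ close enough to $1$, say $1<p<p_0$, where $p_0$ is the threshold chosen in Lemma \ref{lem_bounded}). The Rellich--Kondrachov type compact embedding $BV(\Omega)\hookrightarrow L^1(\Omega)$ valid on bounded Lipschitz domains then yields a subsequence (not relabeled) and a function $u\in BV(\Omega)$ such that $u_p\to u$ strongly in $L^1(\Omega)$ and $u_p\to u$ almost everywhere in $\Omega$ as $p\to 1^+$; weak-$*$ convergence $Du_p\rightharpoonup Du$ in $\mathcal{M}(\Omega)^N$ also holds, but this will not be needed here.

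Second, by Lemma \ref{lem_bounded} we have $\|u_p\|_{L^\infty(\Omega)}\le C$ for a constant $C$ independent of $p$. Passing to the a.e.\ limit along the subsequence above immediately gives $\|u\|_{L^\infty(\Omega)}\le C$, so in particular $u\in L^\infty(\Omega)\cap BV(\Omega)$.

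Finally, to upgrade the $L^1$ convergence to $L^q$ for every finite $q$, fix such a $q$ and write
\[
\int_\Omega |u_p-u|^q\le \bigl(\|u_p\|_{L^\infty(\Omega)}+\|u\|_{L^\infty(\Omega)}\bigr)^{q-1}\int_\Omega |u_p-u|\le (2C)^{q-1}\int_\Omega |u_p-u|,
\]
which tends to $0$ as $p\to 1^+$. (Equivalently, dominated convergence applied to $|u_p-u|^q\le (2C)^q$ together with a.e.\ convergence does the job.) This establishes the corollary. There is no real obstacle: the two lemmas do all the work, and the role of the corollary is merely to package their consequences for later use in the passage to the limit.
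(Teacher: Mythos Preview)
Your proof is correct and is exactly the argument the paper has in mind: the corollary is stated without proof, simply as an immediate consequence of Lemmas \ref{lem_bounded} and \ref{lemma_stimeLN}, and you have written out the standard compactness-plus-interpolation details that justify this. There is nothing to add.
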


From now on, when referring to $u$ we mean the function found in Corollary \ref{cor_u}.

\begin{lemma}\label{lemma_esistenzazL2}
		Let $f\in L^N(\Omega)$ and let $g$ satisfy \eqref{condg}. Then there exists $z\in X(\Omega)_N$ such that
	\begin{equation}\label{esistenzaz1}
		g(u)-\operatorname{div}z= f  \ \ \ \text{in  } \mathcal{D'}(\Omega),
	\end{equation}
	and
	\begin{equation}\label{esistenzaz2}
		(z,Du)=\sqrt{1+|Du|^2} - \sqrt{1-|z|^2} \ \ \text{as measures in } \Omega.
	\end{equation}

	Furthermore,
	\begin{equation}\label{equint}
		\int_{\{|u|\ge k\}} |g(u)| \le \int_{\{|u| \ge k\}} |f|,
	\end{equation}
	holds for every $k>0$. 
\end{lemma}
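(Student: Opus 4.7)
The plan is to construct $z$ as a weak-$*$ limit of normalized gradients, pass to the limit in the approximating equation to obtain \eqref{esistenzaz1}, then use the test $v=u_p$ combined with Green's formula to pin down the pairing identity \eqref{esistenzaz2}, and finally derive \eqref{equint} via a Stampacchia-type test.

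Setting $z_p:=\nabla u_p/\sqrt{1+|\nabla u_p|^2}$, one has $|z_p|\le 1$ pointwise, so up to a subsequence $z_p\overset{*}{\rightharpoonup} z$ in $L^\infty(\Omega)^N$ with $\|z\|_{\infty}\le 1$. The $p$-Laplacian perturbation vanishes in $L^1$ by H\"older's inequality and \eqref{stimatermineenergia}:
\[
(p-1)\int_\Omega|\nabla u_p|^{p-1}\le (p-1)^{1/p}\left((p-1)\int_\Omega|\nabla u_p|^p\right)^{\frac{p-1}{p}}|\Omega|^{1/p}\longrightarrow 0.
\]
Since $u_p$ is uniformly bounded by Lemma \ref{lem_bounded}, for $p$ sufficiently close to $1$ one has $g_p(u_p)=g(u_p)$, and dominated convergence gives $g_p(u_p)\to g(u)$ in $L^q(\Omega)$ for every $q<\infty$, while $f_p\to f$ in $L^N(\Omega)$. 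Passing to the limit in \eqref{pbpw} tested against $v\in C_c^1(\Omega)$ yields \eqref{esistenzaz1}; the identity $\operatorname{div} z=g(u)-f\in L^N(\Omega)$ then gives $z\in X(\Omega)_N$.

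For the pairing identity, the algebraic relation $1-|z_p|^2=1/(1+|\nabla u_p|^2)$ implies
\[
\int_\Omega\frac{|\nabla u_p|^2}{\sqrt{1+|\nabla u_p|^2}}=\int_\Omega\sqrt{1+|\nabla u_p|^2}-\int_\Omega\sqrt{1-|z_p|^2}.
\]
Testing \eqref{pbpw} with $v=u_p$, taking $\liminf_{p\to 1^+}$, and invoking lower semicontinuity of $J_1$ on the zero extension of $u_p$ (which unveils the boundary integral $\int_{\partial\Omega}|u|\,d\mathcal H^{N-1}$), upper semicontinuity of $J_2$ with $\varphi\equiv 1$ along the weak-$*$ limit of $z_p$, dominated convergence for $\int g_p(u_p)u_p$ and $\int f_p u_p$, and non-negativity of $(p-1)\int|\nabla u_p|^p$, one obtains
\[
\int_\Omega g(u)u+\int_\Omega\sqrt{1+|Du|^2}+\int_{\partial\Omega}|u|\,d\mathcal H^{N-1}-\int_\Omega\sqrt{1-|z|^2}\le\int_\Omega fu.
\]
On the other hand, Green's formula (Lemma \ref{21}) applied to $v=u\in BV(\Omega)\cap L^\infty(\Omega)$ together with \eqref{esistenzaz1} reads
\[
\int_\Omega g(u)u+\int_\Omega(z,Du)-\int_{\partial\Omega}u[z,\nu]\,d\mathcal H^{N-1}=\int_\Omega fu,
\]
so subtracting and using $|[z,\nu]|\le 1$ (hence $-u[z,\nu]\le|u|$ on $\partial\Omega$) yields $\int_\Omega(z,Du)\ge\int_\Omega\sqrt{1+|Du|^2}-\int_\Omega\sqrt{1-|z|^2}$. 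The reverse inequality is already pointwise as measures: by Lemma \ref{lemanzas}, $(z,Du)^a=z\cdot D^au\le\sqrt{1+|D^au|^2}-\sqrt{1-|z|^2}$ via \eqref{ine:2}, while $(z,Du)^s\le|D^su|$. Equality of the two integrals therefore propagates to equality as measures, establishing \eqref{esistenzaz2}.

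Finally, \eqref{equint} is obtained by testing \eqref{pbpw} with $v=T_\eta(G_k(u_p))/\eta$: the gradient contributions are non-negative and may be discarded, giving $\int_\Omega g_p(u_p)T_\eta(G_k(u_p))/\eta\le\int_\Omega f_p T_\eta(G_k(u_p))/\eta$; letting $\eta\to 0^+$ by dominated convergence and invoking the sign condition $g(s)s\ge 0$ yields $\int_{\{|u_p|>k\}}|g(u_p)|\le\int_{\{|u_p|>k\}}|f|$. Passing to the limit $p\to 1^+$ via Fatou on the left and dominated convergence on the right delivers \eqref{equint}, first for $k$ outside the at most countable set where $|\{|u|=k\}|>0$ and then for all $k>0$ by monotone approximation in $k$. \textbf{The main obstacle} is the upgrade from the pointwise measure inequality to measure equality in \eqref{esistenzaz2}, which hinges on the delicate matching between the lower semicontinuity of the area functional $J_1$ (which automatically surfaces the boundary term $\int_{\partial\Omega}|u|$) and the \emph{upper} semicontinuity of $J_2$ along the $L^\infty$-weak-$*$ limit constrained by $|z_p|\le 1$, with Green's formula finally closing the integral inequality.
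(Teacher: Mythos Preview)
Your argument is correct, but your route to the pairing identity \eqref{esistenzaz2} differs from the paper's. The paper tests \eqref{pbpw} with $v=u_p\varphi$ for an arbitrary $0\le\varphi\in C^1_c(\Omega)$; after dropping the nonnegative $p$-energy term, passing to the $\liminf$ (lower semicontinuity of the area term, upper semicontinuity of $J_2$), and using \eqref{dist1} together with $-u\operatorname{div}z=(f-g(u))u$, it obtains the \emph{localized} inequality
\[
\int_\Omega\sqrt{1+|Du|^2}\varphi-\int_\Omega\sqrt{1-|z|^2}\varphi\le\int_\Omega(z,Du)\varphi\qquad\forall\,0\le\varphi\in C^1_c(\Omega),
\]
which is already an inequality of measures; combined with the reverse pointwise bounds (via \eqref{ine:2} and $\|z\|_\infty\le1$) this yields \eqref{esistenzaz2}. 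You instead take the \emph{global} test $v=u_p$, let the lower semicontinuity of $J_1$ surface the boundary term $\int_{\partial\Omega}|u|$, and then match it against $-\int_{\partial\Omega}u[z,\nu]$ coming from Green's formula for the limit equation; this only produces the integral inequality $\int_\Omega(z,Du)\ge\int_\Omega\sqrt{1+|Du|^2}-\int_\Omega\sqrt{1-|z|^2}$, but since the reverse holds as measures, the nonnegative measure $\sqrt{1+|Du|^2}-\sqrt{1-|z|^2}-(z,Du)$ has nonpositive total mass and hence vanishes. Your route is a bit more economical (no localizing $\varphi$, no need to show that $(p-1)\int|\nabla u_p|^{p-2}\nabla u_p\cdot\nabla\varphi\,u_p\to0$), at the price of entangling the interior identity with the boundary estimate --- in effect you are borrowing part of the argument that the paper postpones to Lemma \ref{lemma_datoalbordoL2}. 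The treatment of \eqref{equint} is essentially the same: your test $T_\eta(G_k(u_p))/\eta$ is a variant of the paper's $S_{\delta,k}(u_p)$, and your care with the exceptional levels $k$ is adequate (the paper avoids this by approximating from below, via $S_{\delta,k}$ supported on $\{|s|>k-\delta\}$, and sending $\delta\to0^+$ after $p\to1^+$).
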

\begin{proof}
Let $u_p$ be the solution of \eqref{pbp}. 	Firstly observe that, since ${|\nabla u_p|}{({1+|\nabla  u_p|^2})^{-\frac12}}\le 1$, there exists $z\in L^\infty(\Omega)^N$ such that ${\nabla u_p}{({1+|\nabla  u_p|^2})^{-\frac12}}$ converges to $z$ weak$^*$ in $L^\infty(\Omega)^N$ as $p\to 1^+$. It also follows from the weak lower semicontinuity of the norm that $||z||_{L^\infty(\Omega)^N} \le 1$.
	
	Moreover, the above argument, Lemmas \ref{lem_bounded} and \ref{lemma_stimeLN},  and Corollary \ref{cor_u}  give that \eqref{esistenzaz1} holds true. Indeed, we only need to show that the third term in \eqref{pbp} goes to zero in the sense of distributions as $p\to 1^+$; to do that, consider $\varphi \in C^1_c(\Omega)$ and observe that from the H\"older inequality and from \eqref{stimatermineenergia}, one has 
	\begin{equation}\label{termineenergiazero}
		\begin{aligned}
			(p-1)\left|\int_\Omega |\nabla u_p|^{p-2}\nabla u_p\cdot\nabla \varphi \right| &\le (p-1) \left(\int_\Omega |\nabla u_p|^p\right)^{\frac{p-1}{p}} \left(\int_\Omega |\nabla \varphi|^p\right)^{\frac{1}{p}}
			\\
			&\le (p-1)^{\frac{1}{p}} ||\nabla \varphi||_{L^\infty(\Omega)^N}|\Omega|^{\frac{1}{p}} \left((p-1)\int_\Omega |\nabla u_p|^p\right)^{\frac{p-1}{p}}
			\\
			&\stackrel{\eqref{stimatermineenergia}}{\le}(p-1)^{\frac{1}{p}} ||\nabla \varphi||_{L^\infty(\Omega)^N}|\Omega|^{\frac{1}{p}} C^{\frac{p-1}{p}},
		\end{aligned}
	\end{equation}
	which gives that
	$$\lim_{p\to 1^+}(p-1)\int_\Omega |\nabla u_p|^{p-2}\nabla u_p\cdot\nabla \varphi =0.$$
	
	This implies  \eqref{esistenzaz1} and, in particular,  that $z\in X(\Omega)_N$ as $g(u)\in L^\infty(\Omega)$.
	
	\medskip
	
	Let us also underline, for later purposes,  that,  since $u\in L^\infty(\Omega)$ and $f\in L^N(\Omega)$, then it holds
	\begin{equation}\label{moltperu}
	-u\operatorname{div}z =  (f-g(u)) u,
	\end{equation}
	almost everywhere in $\Omega$.

	\medskip
	
	Now we have to show \eqref{esistenzaz2} which  consists (recall Remark \ref{remdef}) in  proving both 
		\begin{equation}\label{zducontinua}
		z\cdot D^a u = \sqrt{1+ |D^a u|^2} + \sqrt{1-|z|^2}
	\end{equation}
	and
	\begin{equation} \label{zdusingolare}
		(z,Du)^s = |D^s u|.
	\end{equation}
	
	\medskip
	
	Hence,  let $0\le \varphi \in C^1_c(\Omega)$ and consider $v=u_p\varphi$ in  \eqref{pbpw}; this takes to
	
	\begin{equation}\begin{aligned}\label{lemmaz1}	
			& \int_\Omega g_p(u_p)u_p\varphi +\int_{\Omega} \frac{|\nabla u_p|^{2}\varphi}{\sqrt{1 +|\nabla u_p|^2}} + \int_{\Omega} \frac{\nabla u_p \cdot \nabla \varphi u_p}{\sqrt{1 +|\nabla u_p|^2}} + (p-1)\int_\Omega |\nabla u_p|^p\varphi
			\\
			& + (p-1)\int_\Omega |\nabla u_p|^{p-2}\nabla u_p\cdot \nabla \varphi u_p = \int_{\Omega}  f_p u_p \varphi.	
		\end{aligned}
	\end{equation}
	Dropping the nonnegative fourth term in \eqref{lemmaz1}, one gets
	\begin{equation}\begin{aligned}\label{lemmaz2}	
			&\int_\Omega g_p(u_p)u_p\varphi + \int_{\Omega} \sqrt{1 +|\nabla u_p|^2}\varphi - \int_{\Omega} \sqrt{1-\frac{|\nabla u_p|^{2}}{1 +|\nabla u_p|^2}}\varphi + \int_{\Omega} \frac{\nabla u_p \cdot \nabla \varphi u_p}{\sqrt{1 +|\nabla u_p|^2}}
			\\
			&+  (p-1)\int_\Omega |\nabla u_p|^{p-2}\nabla u_p\cdot \nabla \varphi u_p \le \int_{\Omega}  f_p u_p \varphi,	
	\end{aligned}\end{equation}
	where we used that	
	\begin{equation}
		\label{firstterm}
		\int_{\Omega} \frac{|\nabla u_p|^{2}\varphi}{\sqrt{1 +|\nabla u_p|^2}} = \int_{\Omega} \sqrt{1 +|\nabla u_p|^2}\varphi - \int_{\Omega} \sqrt{1-\frac{|\nabla u_p|^{2}}{1 +|\nabla u_p|^2}}\varphi.
	\end{equation}
	Now we aim to take the liminf as $p\to 1^+$ in \eqref{lemmaz2}. As $u_p$ strongly converges to $u$ in $L^q(\Omega)$ for any $q<\infty$ and $f_p$ strongly converges to $f$ in $L^N(\Omega)$ as $p\to 1^+$, we can easily pass to the limit in the first and in the last term of \eqref{lemmaz2}. The second term on the left-hand side of \eqref{lemmaz2} is lower semicontinuous with respect to the $L^1$ convergence. The nonpositive third term on the left-hand side of \eqref{lemmaz2} is weakly lower semicontinuous with respect to the $L^1$ convergence (recall \eqref{j2}).	Concerning  the fourth  term on the left-hand side of \eqref{lemmaz2} we use the weak$^*$ convergence of $\nabla u_p(1 +|\nabla u_p|^2)^{-\frac{1}{2}}$ to $z$ in $L^\infty(\Omega)^N$ as well as the strong convergence of $u_p$ in $L^q(\Omega)$ for any $q<\infty$ as $p\to 1^+$.
	
	Let us finally focus on the  last term on the left-hand side of \eqref{lemmaz2} for which   we reason as for \eqref{termineenergiazero}. Indeed one can apply the H\"older inequality with indexes $\left(\frac{p}{p-1}, p\right)$ obtaining that

			\begin{equation*}\label{termineenergiazero2}
			\begin{aligned}
				(p-1)\left|\int_\Omega |\nabla u_p|^{p-2}\nabla u_p\cdot\nabla \varphi u_p \right| &\le \|u_p\|_{L^\infty(\Omega)}(p-1) \left(\int_\Omega |\nabla u_p|^p\right)^{\frac{p-1}{p}} \left(\int_\Omega |\nabla \varphi|^{p}\right)^{\frac{1}{p}} 
				\\
				&\le \|u_p\|_{L^\infty(\Omega)} (p-1)^{\frac{1}{p}} ||\nabla \varphi||_{L^\infty(\Omega)^N}|\Omega|^{\frac{1}{p}} \left((p-1)\int_\Omega |\nabla u_p|^p\right)^{\frac{p-1}{p}}
			\end{aligned}
		\end{equation*}
	whose right-hand goes to zero as $p\to1^+$ thanks to \eqref{stimatermineenergia} and since $u_p$ is uniformly bounded with respect to $p$.

	Then we have proved that
	\begin{equation*}	
	\begin{aligned}	
		\int_\Omega g(u)u\varphi + \int_{\Omega} \sqrt{1 +|D u|^2}\varphi - \int_\Omega \sqrt{1-|z|^2}\varphi &\le - \int_{\Omega} uz\cdot \nabla \varphi + \int_{\Omega}  f u \varphi
		\\
		&\overset{\eqref{moltperu}}{=} -\int_{\Omega} uz\cdot \nabla \varphi-\int_\Omega u\operatorname{div}z \varphi + \int_\Omega g(u)u\varphi.	
	\end{aligned}
	\end{equation*}
	Hence, using  \eqref{dist1}, it holds
	\begin{equation}\label{zdu0}	
		\int_{\Omega} \sqrt{1 +|D u|^2}\varphi - \int_\Omega \sqrt{1-|z|^2}\varphi \le \int_\Omega (z, Du)\varphi,	\ \ \forall  \varphi \in C^1_c(\Omega), \ \varphi \ge 0.
	\end{equation}
	Now observe that, since $\operatorname{div}z \in L^N(\Omega)$ and $u\in L^\infty(\Omega)$, one can apply Lemma \ref{lemanzas} which allows to deduce from inequality \eqref{zdu0} that
	\begin{equation*}\label{zdu1}
		z\cdot D^a u \ge \sqrt{1 +|D^a u|^2} - \sqrt{1-|z|^2},
	\end{equation*}
	almost everywhere in $\Omega$.
	This gives \eqref{zducontinua} since the reverse inequality follows by \eqref{ine:2}.
	
	Proving \eqref{zdusingolare} is immediate by observing that $||z||_{L^\infty(\Omega)^N}\le 1$ implies
	$$(z,Du)^s \le |D u|^s= |D^s u|,$$
	as measures in $\Omega$. The reverse inequality follows by \eqref{zdu0}.
	
	Finally let us check the validity of  \eqref{equint};	we define the function ($k\ge\delta>0$):
	
	$$
	S_{\delta,k}(s)=
	\begin{cases}
		\sgn(s) &\text{if} \ \ |s|> k,
		\\
		0 &\text{if} \ \ |s|\le k-\delta,
		\\
		\displaystyle \frac{\sgn(s) (|s|-k+\delta)}{\delta} \ \ &\text{if} \ \ k-\delta< |s|\le k,
	\end{cases}
	$$
	and we take $v=S_{\delta,k}(u_p)$ in \eqref{pbpw} yielding to
	\begin{equation*}
		\int_\Omega g_p(u_p) S_{\delta,k}(u_p) \le \int_\Omega f_pS_{\delta,k}(u_p)\le \int_{\{|u_p|> k-\delta\}} |f_p|,
	\end{equation*}
	getting rid of the nonnegative second and third term. Then taking the limsup first as  $p\to 1^+$ and then as $\delta \to 0^+$,  one obtains \eqref{equint}.
\end{proof}

\begin{lemma}\label{lemma_datoalbordoL2}
		Let $f\in L^N(\Omega)$ and let $g$ satisfy \eqref{condg}. Then it holds
	$$u(\sgn{u} + [z,\nu])(x)=0 \ \text{for  $\mathcal{H}^{N-1}$-a.e. } x \in \partial\Omega,$$
	where $u$ and $z$ are the function and the vector field found in Corollary \ref{cor_u} and in Lemma \ref{lemma_esistenzazL2}.	
\end{lemma}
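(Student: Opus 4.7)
The natural strategy is to combine an \emph{asymmetric} pair of bounds on the approximating sequence: a one-sided inequality obtained from testing the $p$-problem against $u_p$ and passing to the liminf via semicontinuity, together with the ``Green's formula'' identity obtained by testing the limit equation against $u$. Subtracting the two should collapse every bulk term and leave a boundary-only inequality that, combined with the structural bound $\|[z,\nu]\|_\infty\le1$, yields the desired condition.

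\emph{Step 1: Test the $p$-problem with $u_p$.} I choose $v=u_p$ in \eqref{pbpw} and use the identity $\tfrac{|\nabla u_p|^2}{\sqrt{1+|\nabla u_p|^2}}=\sqrt{1+|\nabla u_p|^2}-\sqrt{1-|z_p|^2}$ with $z_p:=\nabla u_p/\sqrt{1+|\nabla u_p|^2}$; after dropping the nonnegative $p$-Laplacian contribution I get
\[\int_\Omega g_p(u_p)u_p+\int_\Omega\sqrt{1+|\nabla u_p|^2}-\int_\Omega\sqrt{1-|z_p|^2}\le\int_\Omega f_pu_p.\]
Now I take the liminf as $p\to 1^+$. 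The zero-order and datum terms converge by Corollary~\ref{cor_u} together with the uniform bound of Lemma~\ref{lem_bounded}. The functional $J_1$ with $\varphi\equiv 1$ is lower semicontinuous on $BV(\Omega)$, and since $u_p\in W^{1,p}_0(\Omega)$ has zero trace while $u$ a priori does not, this produces the crucial boundary contribution
\[\int_\Omega\sqrt{1+|Du|^2}+\int_{\partial\Omega}|u|\,d\mathcal H^{N-1}\le\liminf_{p\to 1^+}\int_\Omega\sqrt{1+|\nabla u_p|^2}.\]
For the remaining term I use that $z_p\overset{*}{\rightharpoonup} z$ in $L^\infty(\Omega)^N$, which on the bounded domain $\Omega$ entails weak convergence in $L^1$, so the weak upper semicontinuity of $J_2$ recalled in \eqref{j2} gives $\limsup_{p}\int_\Omega\sqrt{1-|z_p|^2}\le\int_\Omega\sqrt{1-|z|^2}$. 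Altogether
\[\int_\Omega g(u)u+\int_\Omega\sqrt{1+|Du|^2}+\int_{\partial\Omega}|u|\,d\mathcal H^{N-1}-\int_\Omega\sqrt{1-|z|^2}\le\int_\Omega fu.\]

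\emph{Step 2: Green's formula in the limit.} Since $u\in BV(\Omega)\cap L^\infty(\Omega)$ and $\operatorname{div}z=g(u)-f\in L^N(\Omega)$, compatibility condition \eqref{cc3} is in force, so I may test \eqref{esistenzaz1} against $u$; applying Lemma~\ref{21} and substituting \eqref{esistenzaz2} yields the identity
\[\int_\Omega g(u)u+\int_\Omega\sqrt{1+|Du|^2}-\int_\Omega\sqrt{1-|z|^2}-\int_{\partial\Omega}u[z,\nu]\,d\mathcal H^{N-1}=\int_\Omega fu.\]
Subtracting this from the inequality of Step~1 collapses every interior term, leaving
\[\int_{\partial\Omega}\big(|u|+u[z,\nu]\big)\,d\mathcal H^{N-1}\le 0.\]

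\emph{Step 3: Conclude.} Since $\|[z,\nu]\|_{L^\infty(\partial\Omega)}\le\|z\|_{L^\infty(\Omega)^N}\le1$, the integrand satisfies $|u|+u[z,\nu]\ge |u|(1-|[z,\nu]|)\ge 0$ for $\mathcal H^{N-1}$-a.e.\ $x\in\partial\Omega$, so the displayed inequality forces $|u|+u[z,\nu]=0$ a.e.\ on $\partial\Omega$, which is equivalent to $u(\sgn u+[z,\nu])=0$ in the multivalued-sign sense of \eqref{def_bordop=1}. The one genuinely delicate point is lining up the correct modes of convergence for the two semicontinuity steps—$L^1$-convergence of $u_p$ for $J_1$ and weak $L^1$-convergence of $z_p$ for $J_2$—but both are already available from the proof of Lemma~\ref{lemma_esistenzazL2}; once they are in place the argument is purely algebraic.
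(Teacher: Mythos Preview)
Your proof is correct and follows essentially the same route as the paper: test \eqref{pbpw} with $u_p$, rewrite the principal term via the identity $\frac{|\nabla u_p|^2}{\sqrt{1+|\nabla u_p|^2}}=\sqrt{1+|\nabla u_p|^2}-\sqrt{1-|z_p|^2}$, pass to the liminf using the lower semicontinuity of $J_1$ (which generates the boundary term $\int_{\partial\Omega}|u|$ since $u_p$ has zero trace) and the upper semicontinuity of $J_2$, then combine with \eqref{moltperu}--\eqref{green} and \eqref{esistenzaz2} to isolate the boundary inequality. The only cosmetic difference is that the paper explicitly inserts the vanishing term $\int_{\partial\Omega}|u_p|\,d\mathcal H^{N-1}$ before taking the liminf, whereas you invoke $J_1$ directly; the argument is otherwise identical.
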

\begin{proof}
	 Let $u_p$ be a solution to \eqref{pbp} and let us take $v=u_p$ in \eqref{pbpw} yielding to
	\begin{equation*}
		\int_\Omega g_p(u_p)u_p + \int_{\Omega} \frac{|\nabla u_p|^2}{\sqrt{1+|\nabla u_p|^2}} + \int_{\partial \Omega} |u_p| d\mathcal{H}^{N-1}  \le  \int_{\Omega}  f_pu_p,	
	\end{equation*}		
	since  $u_p$ has zero Sobolev trace.

	Moreover reasoning as for \eqref {firstterm} (with $\varphi =1$) one obtains
	\begin{equation*}\label{boundary0}
		\int_\Omega g_p(u_p)u_p +\int_{\Omega} \sqrt{1+|\nabla u_p|^2} - \int_{\Omega} \sqrt{1-\frac{|\nabla u_p|^2}{1+|\nabla u_p|^2}} + \int_{\partial \Omega}|u_p| d\mathcal{H}^{N-1} \le \int_{\Omega}  f_p u_p.	
	\end{equation*}
	
	Now we can take the liminf as $p\to 1^+$ acting similarly to what done in Lemma \ref{lemma_esistenzazL2}. This allows to deduce that
	\begin{equation*}\label{boundary1}
		\int_\Omega g(u)u + \int_{\Omega} \sqrt{1+|D u|^2} - \int_{\Omega} \sqrt{1-|z|^2} + \int_{\partial \Omega}|u| d\mathcal{H}^{N-1} \le  \int_{\Omega}  fu.	
	\end{equation*}
	Now, recalling \eqref{moltperu} and \eqref{green}, one can write 
	\begin{equation*}\label{bordo1}
		\begin{aligned}
			\int_\Omega g(u)u + \int_{\Omega} \sqrt{1+|Du|^2} - \int_{\Omega} \sqrt{1-|z|^2} + \int_{\partial \Omega}|u| d\mathcal{H}^{N-1}  & \stackrel{\eqref{moltperu}}{\le}  -\int_\Omega u\operatorname{div}z +\int_\Omega g(u)u
			\\
			&\stackrel{\eqref{green}}{=} \int_{\Omega}(z,Du) - \int_{\partial \Omega} u[ z,\nu]d\mathcal{H}^{N-1} + \int_\Omega g(u)u.
		\end{aligned}	
	\end{equation*}
	Then the proof of Lemma \ref{lemma_datoalbordoL2} follows by  \eqref{esistenzaz2} and by the  fact that $|[z,\nu]|\leq 1$ $\mathcal{H}^{N-1}-$a.e. on $\partial \Omega$.
\end{proof}

\begin{proof}[Proof of Theorem \ref{teomain}]
	Let $u_p$ be a solution to \eqref{pbp}. Then it follows from Lemmas \ref{lem_bounded} and \ref{lemma_stimeLN} that $u_p$ is bounded in $BV(\Omega)\cap L^\infty(\Omega)$ with respect to $p$. Corollary \ref{cor_u} guarantees that $u_p$ converges, up to subsequences, to $u$ in $L^q(\Omega)$ for every $q<\infty$.\bk 
	
	Then \eqref{def_distrp=1} and \eqref {def_zp=1} are proved in Lemma \ref{lemma_esistenzazL2}.
	 Finally \eqref{def_bordop=1} follows from Lemma \ref{lemma_datoalbordoL2}. The proof is concluded.
\end{proof}

\subsection{Uniqueness of finite energy solutions}

In this section we prove Theorem \ref{teomainuniqueL2}. Let us explicitly highlight that our proof of the uniqueness result is strongly related to the presence of the absorption term. \bk

\begin{proof}[Proof of Theorem \ref{teomainuniqueL2}]
	Let $u_1$ and $u_2$ be solutions to \eqref{pb} and let $z_1$ and $z_2$ be the corresponding vector fields. Using  \eqref{def_distrp=1} (recall Remark \ref{remdef}), we readily have 
	\begin{equation}\label{testL2}
		\int_\Omega g(u_i) v -\int_\Omega v\,\operatorname{div} z_i = \int_\Omega  fv, \ \ \forall v\in BV(\Omega), \ \ i=1,2.
	\end{equation}	
	We take $v=u_1 - u_2$ in the difference between  two weak formulations \eqref{testL2} related to $u_1$ and $u_2$, obtaining
	
	\begin{equation}\label{unique1L2}
		\begin{aligned}
			&\int_\Omega (g(u_1)-g(u_2))(u_1-u_2) + \int_\Omega (z_1, Du_1)-\int_\Omega(z_2, Du_1)
			\\
			&+ \int_\Omega(z_2, Du_2) - \int_\Omega(z_1, Du_2) - \int_{\partial\Omega}(u_1-u_2)[z_1,\nu]\, d\mathcal H^{N-1}				    	
			\\
			&+\int_{\partial\Omega}(u_1-u_2)[z_2,\nu]\, d\mathcal H^{N-1}						    	
			= 0,
		\end{aligned}
	\end{equation}
	
	after an application of \eqref{green}.
	
	Observe first that from \eqref{def_bordop=1} it holds
	$$
	u_{i}(\sgn{u_i}+[z_{i},\nu])=0\ \ \mathcal H^{N-1}-\text{a.e. on}\ \partial\Omega\  \ \text{for}\ \  i=1,2.
	$$
	
	Hence one can rewrite the boundary terms as
	
	\begin{equation}\label{boundaryuniqueL2}
		\begin{aligned}
			&-\int_{\partial\Omega}(u_1-u_2)[z_1,\nu]\, d\mathcal H^{N-1}		+ \int_{\partial\Omega}(u_1-u_2)[z_2,\nu]\, d\mathcal H^{N-1}
			\\
			&= 	\int_{\partial\Omega}(|u_1|  + u_2[z_1,\nu])\, d\mathcal H^{N-1}	+ \int_{\partial\Omega}(|u_2| + u_1[z_2,\nu])\, d\mathcal H^{N-1}	
			\\
			&= 	\int_{\partial\Omega}(|u_1|  + u_1[z_2,\nu])\, d\mathcal H^{N-1}	+ \int_{\partial\Omega}(|u_2| + u_2[z_1,\nu])\, d\mathcal H^{N-1},	
		\end{aligned}
	\end{equation}
	which are nonnegative since $|[z_i,\nu]|\le 1$ for $i=1,2$.
	
	Gathering \eqref{boundaryuniqueL2} into \eqref{unique1L2} gives that
	
	\begin{equation*}
		\begin{aligned}
			\int_{\Omega} (g(u_1)-g(u_2))(u_1-u_2) &+ \int_\Omega (z_1, Du_1)-\int_\Omega(z_2, Du_1)
			\\
			&+ \int_\Omega(z_2, Du_2) - \int_\Omega(z_1, Du_2)  \le 0.
		\end{aligned}
	\end{equation*}
	
	Moreover, using \eqref{def_zp=1}, one gets
	
	\begin{equation*}
		\begin{aligned}
			&\int_{\Omega} (g(u_1)-g(u_2))(u_1-u_2) + \int_\Omega\sqrt{1+|Du_1|^2} - \int_\Omega \sqrt{1-|z_1|^2} - \int_\Omega(z_2, Du_1)
			\\
			&+ \int_\Omega \sqrt{1+|Du_2|^2} - \int_\Omega \sqrt{1-|z_2|^2} - \int_\Omega(z_1, Du_2)  \le 0.
		\end{aligned}
	\end{equation*}

	Now we aim to prove that
	$$\sqrt{1+|Du_1|^2} - \sqrt{1-|z_2|^2}  \ge (z_2,Du_1)$$
	and that
	$$\sqrt{1+|Du_2|^2} - \sqrt{1-|z_1|^2}  \ge (z_1,Du_2)\,,$$
	as measures in $\Omega$.
	This easily follows by splitting the measures in the absolutely continuous and singular parts.
	Let us observe that for the absolutely continuous part of the measures one needs that
	$$\sqrt{1+|D^a u_1|^2} - \sqrt{1-|z_2|^2}  \ge (z_2,Du_1)^a=z_2\cdot D^a u_1$$
	and that
	$$\sqrt{1+|D^a u_2|^2} - \sqrt{1-|z_1|^2}  \ge (z_1,Du_2)^a=z_1\cdot D^a u_2,$$
	which are given by  \eqref{ine:2} once one recalls that
	$$|z_i| = \frac{|D^a u_i|}{\sqrt{1+|D^a u_i|^2}}\leq 1\ \ \ i=1,2.$$
	For the singular part it is sufficient to recall that $||z_i||_{L^\infty(\Omega)^N}\le 1$.
	
	Hence we have shown that
	
	\begin{equation*}
		\begin{aligned}
			\int_{\Omega} (g(u_1)-g(u_2))(u_1-u_2)\le0,
		\end{aligned}
	\end{equation*}
	which concludes the proof as $g$ is increasing.
\end{proof}

\section{The  case of $L^1$ data}
\label{secL1}

Here we deal with \eqref{pb} in presence of a merely integrable datum $f$ and, once again, $g$ satisfying \eqref{condg}.

In this case one can not expect finite energy solutions. We specify how a weak solution of problem \eqref{pb} is meant in this case.
\begin{defin}
	\label{weakdefL1}
	Let $f\in L^1(\Omega)$. A function $u$ which is almost everywhere finite in $\Omega$ and such that both $g(u)\in L^1(\Omega)$ and $T_k(u)\in BV(\Omega)$ for any $k>0$,  is a solution to problem \eqref{pb} if there exists $z\in X(\Omega)_1$ with $||z||_{L^\infty(\Omega)^N}\le 1$ such that
	\begin{align}
		&g(u)-\operatorname{div}z = f \ \ \text{in}\ \ \mathcal{D}'(\Omega), \label{def_distrp=1L1}
		\\
		&(z,DT_k(u))=\sqrt{1+|DT_k(u)|^2} - \sqrt{1-|z_k|^2} \label{def_zp=1L1} \ \text{as measures in } \Omega \ \text{with} \ z_k:=z\chi_{\{|u|\le k\}},
		\\
		&T_k(u)(\sgn{T_k(u)} + [z,\nu])(x)=0 \label{def_bordop=1L1} \ \text{for  $\mathcal{H}^{N-1}$-a.e. } x \in \partial\Omega,
	\end{align}
	for any $k>0$.
\end{defin}

\begin{remark}\label{remarkdefinizioni}
	Let us explicitly observe that a function $u$, solution to \eqref{pb} in the sense of Definition \ref{weakdef}, is also a solution to the same problem in the sense of Definition \ref{weakdefL1}. Indeed, if $z_k=z\chi_{\{|u|\le k\}}$, it follows from \eqref{zesplicito} that
	\begin{equation}\label{1remzL1}
		z \cdot D^aT_k(u)=\sqrt{1+|D^aT_k(u)|^2} - \sqrt{1-|z_k|^2}.
	\end{equation}
	Moreover, since $(z,DT_k(u))\le |DT_k(u)|$ one has that $(z,DT_k(u))^s\le |D^sT_k(u)|$. For the reverse inequality it is sufficient to observe that
	\[
	|D^sT_k(u)|+|D^sG_k(u)|=|D^su | =(z,Du)^s=(z, DT_k(u))^s+(z,DG_k(u))^s \le |D^sT_k(u)|+|D^sG_k(u)|.
	\]
	Then the above becomes an equality, which yields to 
	\begin{equation}\label{2remzL1}
		(z,DT_k(u))^s= |D^s T_k(u)|.
	\end{equation}
	Equations \eqref{1remzL1} and \eqref{2remzL1} show that \eqref{def_zp=1L1} holds. This is sufficient to conclude that $u$ is a solution to \eqref{pb} in the sense of Definition \ref{weakdefL1}. We stress that conditions \eqref{def_zp=1L1} and \eqref{def_bordop=1L1} are the translation of, resp.,  \eqref{def_zp=1} and \eqref{def_bordop=1} to the $L^1$--setting and they formally tend to them as $k\to+\infty$. 
\end{remark}

\begin{theorem}\label{teoL1}
	Let $f\in L^1(\Omega)$ and let $g$ satisfy \eqref{condg}. Then there exists a solution to problem \eqref{pb} in the sense of Definition \ref{weakdefL1}.
\end{theorem}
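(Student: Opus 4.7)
The plan is to approximate the $L^1$-datum by $L^N$-data, invoke Theorem \ref{teomain}, and pass to the limit by means of truncations, as customary for elliptic problems with $L^1$-data. I would set $f_n=T_n(f)\in L^\infty(\Omega)\subset L^N(\Omega)$, so that $f_n\to f$ in $L^1(\Omega)$ with $\|f_n\|_{L^1(\Omega)}\le \|f\|_{L^1(\Omega)}$, and let $(u_n,z_n)$ be the solution pair provided by Theorem \ref{teomain} with datum $f_n$, satisfying \eqref{def_distrp=1}--\eqref{def_bordop=1} as well as the entropy-type estimate \eqref{equint}.

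The first step is to derive $n$-independent a priori bounds. From \eqref{equint} and the coercivity of $g$ in \eqref{condg}, one obtains
\[
\inf_{|s|\ge k}|g(s)|\cdot |\{|u_n|\ge k\}|\le \|f\|_{L^1(\Omega)},
\]
so that $|\{|u_n|\ge k\}|\to 0$ as $k\to\infty$ uniformly in $n$; this will guarantee that any a.e.\ limit remains finite a.e. Next, testing the equation for $u_n$ with $T_k(u_n)$ via the Green formula \eqref{green} (admissible under \eqref{cc1}), using the identification \eqref{def_zp=1} together with the inequality \eqref{ine:2}, and exploiting the boundary condition \eqref{def_bordop=1} (which forces $-T_k(u_n)[z_n,\nu]=|T_k(u_n)|$ $\mathcal H^{N-1}$-a.e.\ on $\partial\Omega$), one arrives at
\[
\int_\Omega g(u_n)T_k(u_n)+\int_\Omega |DT_k(u_n)|+\int_{\partial\Omega}|T_k(u_n)|\, d\mathcal H^{N-1}\le k\|f\|_{L^1(\Omega)}+|\Omega|.
\]
Hence $T_k(u_n)$ is uniformly bounded in $BV(\Omega)$ for every fixed $k>0$. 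A standard diagonal argument then delivers $u_n\to u$ a.e.\ in $\Omega$ with $u$ finite a.e.\ and $T_k(u)\in BV(\Omega)$ for every $k$; by Vitali's theorem, together with the equi-integrability granted by \eqref{equint}, I also get $g(u_n)\to g(u)$ in $L^1(\Omega)$.

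Since $\|z_n\|_{L^\infty}\le 1$, up to a subsequence $z_n$ converges weakly-$\ast$ in $L^\infty(\Omega)^N$ to some $z$ with $\|z\|_{L^\infty}\le 1$, and passing to the limit in \eqref{def_distrp=1} yields \eqref{def_distrp=1L1} together with $z\in X(\Omega)_1$. To prove the identification \eqref{def_zp=1L1}, I would adapt the argument of Lemma \ref{lemma_esistenzazL2}: test the equation for $u_n$ with $T_k(u_n)\varphi$, $0\le\varphi\in C^1_c(\Omega)$, drop the nonnegative absorption term, split the gradient term as in \eqref{firstterm}, and take the lower limit via the lower semicontinuity of $J_1$ and the weak upper semicontinuity of $J_2$ (see \eqref{j2}); the weak-$\ast$ limit of $z_n\chi_{\{|u_n|\le k\}}$ is identified with $z_k=z\chi_{\{|u|\le k\}}$ thanks to the a.e.\ convergence of $u_n$ and a standard matching of the level sets. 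The resulting inequality
\[
\int_\Omega\sqrt{1+|DT_k(u)|^2}\,\varphi-\int_\Omega\sqrt{1-|z_k|^2}\,\varphi\le\int_\Omega(z,DT_k(u))\,\varphi
\]
for every such $\varphi$, combined with the reverse inequality as measures (furnished by \eqref{ine:2} on the absolutely continuous part via Lemma \ref{lemanzas} and by $\|z\|_{L^\infty}\le 1$ on the singular part), gives \eqref{def_zp=1L1}. Finally, \eqref{def_bordop=1L1} is recovered by plugging $v=T_k(u_n)$ in the weak formulation for $u_n$ and repeating almost verbatim the scheme of Lemma \ref{lemma_datoalbordoL2}.

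The main obstacle I anticipate is the identification \eqref{def_zp=1L1}: the limit field $z$ only lies in $X(\Omega)_1$, and one must carefully localize $z_n$ onto $\{|u|\le k\}$ in the limit, reconciling the a priori distinct sets $\{|u_n|\le k\}$ and $\{|u|\le k\}$ via the a.e.\ convergence of $u_n$ (possibly sharpened by an Egorov-type argument), and separately handling the absolutely continuous and singular parts of the pairing. The remaining ingredients amount to technical adaptations of the computations already carried out in Section \ref{secL2}.
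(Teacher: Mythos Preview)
Your overall strategy coincides with the paper's: approximate by $f_n=T_n(f)$, invoke Theorem \ref{teomain}, test with $T_k(u_n)$ to get uniform $BV$-bounds on truncations, extract the a.e.\ limit $u$ and the weak-$\ast$ limit $z$, obtain $g(u_n)\to g(u)$ in $L^1$ from \eqref{equint} via Vitali, and then recover \eqref{def_distrp=1L1}--\eqref{def_bordop=1L1} by the semicontinuity scheme of Section \ref{secL2}. This is exactly Lemmas \ref{lemma_stimeL1}--\ref{lemma_datoalbordoL1}.

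One slip to flag: in the identification step you propose to ``drop the nonnegative absorption term''. This does not work here. If you discard $\int_\Omega g(u_n)T_k(u_n)\varphi\ge 0$ from the left and then pass to the limit, after reinserting the equation $-\operatorname{div}z=f-g(u)$ you end up with
\[
\int_\Omega\sqrt{1+|DT_k(u)|^2}\,\varphi-\int_\Omega\sqrt{1-|z_k|^2}\,\varphi
\le \int_\Omega (z,DT_k(u))\,\varphi+\int_\Omega g(u)T_k(u)\,\varphi,
\]
and the extra nonnegative term on the right spoils the inequality. The paper keeps the absorption term throughout, passes it to the limit using the $L^1$-convergence of $g(u_n)$ you already established, and the two absorption contributions cancel exactly (see the proof of Lemma \ref{lemma_esistenzaL1}). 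This is an easy fix, but the step as you wrote it would fail.

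On the obstacle you correctly anticipate---identifying the weak limit of $z_n\chi_{\{|u_n|\le k\}}$ with $z\chi_{\{|u|\le k\}}$---the paper's argument is slightly different from the Egorov route you suggest. First one gets the identification for almost every $k>0$ (namely those with $|\{|u|=k\}|=0$), which already yields \eqref{def_zp=1L1} and hence the explicit formula $z_k=D^aT_k(u)/\sqrt{1+|D^aT_k(u)|^2}$ for those $k$. Then, for an arbitrary $k$, one picks $h>k$ in this good set and reads off from the explicit formula for $z_h$ that $z=0$ a.e.\ on $\{|u|=k\}$; this upgrades the identification to every $k>0$ (see the end of the proof of Lemma \ref{lemma_esistenzaL1} and Remark \ref{conv_zn}).
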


We also state the following uniqueness result:
\begin{theorem}\label{teomainunique}
Let $g$ be an increasing function. Then there is at most one solution to problem \eqref{pb} in the sense of Definition \ref{weakdefL1}.
\end{theorem}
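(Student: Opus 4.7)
The plan is to mimic the argument of Theorem \ref{teomainuniqueL2}, but since the solutions $u_1, u_2$ are no longer required to belong to $BV(\Omega)$, the test function $u_1-u_2$ must be replaced by a truncated difference. I would pick $v_k := T_k(u_1) - T_k(u_2)$, which lies in $BV(\Omega)\cap L^\infty(\Omega)$ by Definition \ref{weakdefL1}, apply Green's formula under the compatibility condition \eqref{cc3} (admissible since $\operatorname{div} z_i = g(u_i) - f \in L^1(\Omega)$), and then let $k\to\infty$.

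Subtracting the two distributional formulations \eqref{def_distrp=1L1} and testing with $v_k$ gives, via Lemma \ref{21},
\begin{equation*}
\int_\Omega (g(u_1)-g(u_2))\,v_k + \int_\Omega (z_1-z_2, D v_k) = \int_{\partial\Omega} v_k\, [z_1-z_2,\nu]\, d\mathcal H^{N-1}.
\end{equation*}
The first integrand is nonnegative because both $g$ and $T_k$ are nondecreasing. For the boundary term, condition \eqref{def_bordop=1L1} applied at level $k$ gives $T_k(u_i)[z_i,\nu] = -|T_k(u_i)|$ $\mathcal H^{N-1}$-a.e., and expanding the product exactly as in the proof of Theorem \ref{teomainuniqueL2} shows that the right-hand side is nonpositive.

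The main technical step will be to control the pairing $\int_\Omega (z_1-z_2, Dv_k)$. I would split it into four pieces and use \eqref{def_zp=1L1} to express the diagonal terms as $\int(z_i, DT_k(u_i)) = \int \sqrt{1+|DT_k(u_i)|^2} - \int \sqrt{1-|z_{i,k}|^2}$ with $z_{i,k} := z_i\chi_{\{|u_i|\leq k\}}$, and to bound the cross terms through the algebraic inequality \eqref{ine:2} on the absolutely continuous part together with $\|z_j\|_\infty\leq 1$ on the singular part, obtaining $\int(z_j, DT_k(u_i)) \leq \int \sqrt{1+|DT_k(u_i)|^2} - \int \sqrt{1-|z_j|^2}$ for $j\neq i$. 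After cancellation of the area contributions, one is left with
\begin{equation*}
\int_\Omega (z_1-z_2, Dv_k) \geq \sum_{i=1,2} \int_{\{|u_i|>k\}}\left(\sqrt{1-|z_i|^2}-1\right)dx =: -\omega(k).
\end{equation*}
This is the delicate point: unlike in Theorem \ref{teomainuniqueL2}, where the four terms cancel exactly on the whole domain, truncation leaves an error concentrated on $\{|u_i|>k\}$. The error is controlled in absolute value by $|\{|u_1|>k\}|+|\{|u_2|>k\}|$, which vanishes as $k\to\infty$ precisely because each $u_i$ is a.e.\ finite by Definition \ref{weakdefL1}.

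Combining the three estimates yields $0 \leq \int_\Omega (g(u_1)-g(u_2))(T_k(u_1)-T_k(u_2)) \leq \omega(k)$. Since the integrand is nonnegative and converges pointwise a.e.\ to $(g(u_1)-g(u_2))(u_1-u_2)$ as $k\to\infty$, Fatou's lemma gives $\int_\Omega (g(u_1)-g(u_2))(u_1-u_2) = 0$, and monotonicity of $g$ forces $u_1=u_2$ almost everywhere in $\Omega$.
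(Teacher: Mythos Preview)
Your argument is correct and follows the same route as the paper: test the difference of the two distributional equations with $T_k(u_1)-T_k(u_2)\in BV(\Omega)\cap L^\infty(\Omega)$, apply Green's formula under \eqref{cc3}, discard the boundary contribution via \eqref{def_bordop=1L1}, replace the diagonal pairings through \eqref{def_zp=1L1}, and bound the cross pairings with \eqref{ine:2} on the absolutely continuous part together with $\|z_j\|_\infty\le1$ on the singular part. The only technical difference is that the paper matches the cross terms against $\sqrt{1-|z_{j,k}|^2}$ instead of $\sqrt{1-|z_j|^2}$, so it claims the inequality $\int_{\{|u_1|\le k,\ |u_2|\le k\}}(g(u_1)-g(u_2))(u_1-u_2)\le0$ already at fixed $k$, with no remainder; your variant, which carries the harmless error $\omega(k)\le |\{|u_1|>k\}|+|\{|u_2|>k\}|\to0$ and concludes via Fatou, is an equally valid (and perhaps more transparent) way to close the argument.
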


\subsection{Existence of infinite energy solutions}

By exploiting the results of Section \ref{secL2},  we work by approximation via the following problems
\begin{equation}
	\label{pbL1approx}
	\begin{cases}
		g(u_n)\dis -\operatorname{div}\left(\frac{D u_n}{\sqrt{1+|D u_n|^2}}\right) = f_n & \text{in}\;\Omega,\\
		u_n=0 & \text{on}\;\partial\Omega,
	\end{cases}
\end{equation}
where $f_n:= T_n(f)$. The existence of a solution  $u_n\in BV(\Omega)\cap L^{\infty}(\Omega)$ is proved in Theorem \ref{teomain}. This means that there exists $z_n\in X(\Omega)_N$ with $||z_n||_{L^\infty(\Omega)^N}\le 1$ such that it holds
\begin{align}
	&	\int_\Omega g(u_n) v -\int_\Omega v\,\operatorname{div} z_n = \int_\Omega  f_nv, \ \ \forall v\in BV(\Omega),\ \  \label{def_distrp=1approx}
	\\
	&(z_n,Du_n)=\sqrt{1+|Du_n|^2} - \sqrt{1-|z_n|^2} \label{def_zp=1approx} \ \ \ \ \text{as measures in } \Omega,
	\\
	&u_n(\sgn{u_n} + [z_n,\nu])(x)=0 \label{def_bordop=1approx}\ \ \ \text{for  $\mathcal{H}^{N-1}$-a.e. } x \in \partial\Omega.
\end{align}

We begin by proving estimates in $BV(\Omega)$ with respect to $n$ for any truncation of the approximating solutions.

\begin{lemma}\label{lemma_stimeL1}
	Let $f\in L^1(\Omega)$ and let $g$ satisfy \eqref{condg}. Let $u_n$ be the solution to \eqref{pbL1approx} given by Theorem \ref{teomain}. Then $$\|T_k(u_n)\|_{BV(\Omega)}\leq C (k +1),\ \ \forall k>0, $$ where $C$ is a positive constant not depending on $n$. Then there exists an almost everywhere finite function $u$ such that $T_k(u)\in BV(\Omega)$ for any $k>0$. Moreover, up to subsequences, $u_n\to u$ a.e. on $\Omega$, and  $g(u_n)\to g(u)$ in $L^1(\Omega)$ as $n\to\infty$.
\end{lemma}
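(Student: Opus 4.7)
The plan is to derive the uniform $BV$-bound on truncations by testing the approximate formulation \eqref{def_distrp=1approx} against $v=T_k(u_n)\in BV(\Omega)\cap L^\infty(\Omega)$, to obtain a.e.\ finiteness of the limit from the distributional inequality \eqref{equint}, and finally to close up the $L^1$-convergence of $g(u_n)$ via Vitali's theorem.

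First I would insert $v=T_k(u_n)$ in \eqref{def_distrp=1approx} and apply Green's formula \eqref{green} (legitimate since $z_n\in X(\Omega)_N$ and $T_k(u_n)\in BV(\Omega)\cap L^\infty(\Omega)$) to rewrite
\[
-\int_\Omega T_k(u_n)\operatorname{div}z_n=\int_\Omega (z_n,DT_k(u_n))-\int_{\partial\Omega}T_k(u_n)[z_n,\nu]\,d\mathcal H^{N-1}.
\]
Since $u_n$ solves \eqref{pbL1approx} in the sense of Definition \ref{weakdef}, by Remark \ref{remarkdefinizioni} the pairing satisfies $(z_n,DT_k(u_n))=\sqrt{1+|DT_k(u_n)|^2}-\sqrt{1-|z_{n,k}|^2}$, so after integration
\[
\int_\Omega (z_n,DT_k(u_n))\ge \int_\Omega |DT_k(u_n)|-|\Omega|.
\]
For the boundary term I would use \eqref{def_bordop=1approx}: since $\sgn T_k(u_n)=\sgn u_n$ wherever $u_n\neq 0$, we obtain $T_k(u_n)[z_n,\nu]=-|T_k(u_n)|$ $\mathcal H^{N-1}$-a.e.\ on $\partial\Omega$. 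Dropping the nonnegative absorption term $\int_\Omega g(u_n)T_k(u_n)\ge 0$ (guaranteed by \eqref{condg}) and using $|T_k(u_n)|\le k$ and $|f_n|\le |f|$, I arrive at
\[
\int_\Omega |DT_k(u_n)|+\int_{\partial\Omega}|T_k(u_n)|\,d\mathcal H^{N-1}\le k\|f\|_{L^1(\Omega)}+|\Omega|,
\]
which is the announced estimate with a constant depending only on $\|f\|_{L^1(\Omega)}$ and $|\Omega|$.

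Next, to build the a.e.\ finite limit $u$, I would control the superlevel sets uniformly in $n$ through \eqref{equint}, available because each $u_n$ is an $L^N$-solution: this gives $\int_{\{|u_n|\ge k\}}|g(u_n)|\le\int_{\{|u_n|\ge k\}}|f|\le\|f\|_{L^1(\Omega)}$, and Chebyshev combined with $\inf_{|s|\ge k}|g(s)|\to+\infty$ from \eqref{condg} yields
\[
|\{|u_n|\ge k\}|\le \frac{\|f\|_{L^1(\Omega)}}{\inf_{|s|\ge k}|g(s)|}\xrightarrow[k\to\infty]{}0
\]
uniformly in $n$. Combining this decay with the $BV$-compactness of $\{T_k(u_n)\}_n$ for each $k$ and a diagonal extraction, I get a subsequence with $T_k(u_n)\to T_k(u)$ in $L^1(\Omega)$ and a.e., from which $u_n\to u$ a.e.\ with $u$ a.e.\ finite, and lower semicontinuity of the total variation delivers $T_k(u)\in BV(\Omega)$ for every $k>0$.

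Finally, for the $L^1$-convergence of $g(u_n)$ I would invoke Vitali: almost everywhere convergence is immediate from the continuity of $g$ and the a.e.\ convergence of $u_n$, so the task is equi-integrability. For a measurable $E\subset\Omega$, splitting across $\{|u_n|<k\}$ and $\{|u_n|\ge k\}$,
\[
\int_E |g(u_n)|\le |E|\sup_{|s|\le k}|g(s)|+\int_{\{|u_n|\ge k\}}|f|,
\]
and the last integral is uniformly small in $n$ by the equi-distribution estimate above together with the absolute continuity of the integral of $f\in L^1(\Omega)$. The main tension in this step is precisely that \eqref{condg} imposes no growth rate on $g$, so no single integrable dominant exists for $\{g(u_n)\}_n$; the decay of $|\{|u_n|\ge k\}|$ obtained from \eqref{equint} is exactly the ingredient that replaces such a dominant and makes Vitali applicable.
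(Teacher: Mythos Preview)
Your proof is correct and follows essentially the same route as the paper: testing \eqref{def_distrp=1approx} with $T_k(u_n)$, using Green's formula together with \eqref{def_bordop=1approx} and the pairing identity from Remark \ref{remarkdefinizioni} to obtain the uniform $BV$-bound, then deducing the a.e.\ limit via compactness and extracting the $L^1$-convergence of $g(u_n)$ from \eqref{equint}. Your treatment of the equi-integrability step---using the Chebyshev-type bound $|\{|u_n|\ge k\}|\le \|f\|_{L^1}/\inf_{|s|\ge k}|g(s)|$ and then the absolute continuity of $\int|f|$---is in fact more transparent than the paper's rather compressed argument, but the content is the same.
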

\begin{proof}
Let $k>0$ and let us fix $v=T_k(u_n)$ in \eqref{def_distrp=1approx} obtaining
\begin{equation*}
	\int_\Omega g(u_n) T_k(u_n) - \int_\Omega T_k(u_n)\,\operatorname{div} z_n \le k \|f\|_{L^1(\Omega)}.
\end{equation*}
Then recalling \eqref{green}, \eqref{def_bordop=1approx} and the fact that $u_n\in BV(\Omega)$, one gets
\begin{equation*}
	\int_\Omega g(u_n) T_k(u_n) + \int_\Omega (z_n, D T_k(u_n)) + \int_{\partial\Omega} |T_k(u_n)| \ d \mathcal{H}^{N-1} \le k \|f\|_{L^1(\Omega)}.
\end{equation*}
Now, recalling \eqref{def_zp=1approx} and the discussion in Remark \ref{remarkdefinizioni}, one has
\begin{equation*}\label{stimaL1}
	\int_\Omega g(u_n) T_k(u_n) + \int_\Omega \sqrt{1 + |D T_k(u_n)|^2} - \int_\Omega \sqrt{1-|z_n|^2\chi_{\{|u_n|\le k\}}} + \int_{\partial\Omega} |T_k(u_n)| \ d \mathcal{H}^{N-1} \le k \|f\|_{L^1(\Omega)},
\end{equation*}
which  readily implies \bk  that $T_k(u_n)$ is bounded in $BV(\Omega)$ with respect to $n$ for any $k>0$. This is sufficient to deduce the existence of a limit function $u$ to whom $u_n$ converges, up to subsequences, almost everywhere in $\Omega$ as $n\to\infty$. Moreover $T_k(u) \in BV(\Omega)$. 

It remains to show that $g(u_n)$, up to subsequences, converges to $g(u)$ in $L^1(\Omega)$ as $n\to\infty$.
First observe that an application of the Fatou Lemma with respect to $n$ in \eqref{equint} gives that $g(u)\in L^1(\Omega)$; observe, in particular,  that  it  implies that $u$ is almost everywhere finite in $\Omega$.
Hence, one has that  $\forall \varepsilon>0$  there exists $h$ such that $|\{|g(u_n)|\geq h\}|< \varepsilon$. Using the first assumption in \eqref{condg}  there exists a increasing sequence $k_h>0$ such that $\{|u_n|\geq k_h\}\subseteq \{|g(u_n)|\geq h\}$. Therefore,  the equi-integrability of $g(u_n)$ is a consequence of \eqref{equint} with $k_h$ in place of $k$.   The proof is concluded.
\end{proof}

\begin{lemma}\label{lemma_esistenzaL1}
	Let $f\in L^1(\Omega)$ and let $g$ satisfy \eqref{condg}. Then there exists $z\in X(\Omega)_1$ such that
	\begin{equation}\label{esistenzaz1L1}
		g(u)-\operatorname{div}z= f  \ \ \ \text{in  } \mathcal{D'}(\Omega),
	\end{equation}
	and
	\begin{equation}\label{esistenzaz2L1}
		(z,DT_k(u))=\sqrt{1+|DT_k(u)|^2} - \sqrt{1-|z_k|^2} \ \ \text{as measures in } \Omega \ \text{and for any }k>0,
	\end{equation}
	where $u$ is the function found in Lemma \ref{lemma_stimeL1} and $z_k=z\chi_{\{|u|\le k\}}$. 
\end{lemma}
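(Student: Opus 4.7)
The plan is to extract $z$ as a weak-$*$ limit of the approximating fields $z_n$, identify $\operatorname{div}z$ through the strong $L^1$ convergence available from Lemma \ref{lemma_stimeL1}, and then derive \eqref{esistenzaz2L1} by passing to the limit in the approximate measure identity
\[ (z_n,DT_k(u_n))=\sqrt{1+|DT_k(u_n)|^2}-\sqrt{1-|z_{n,k}|^2},\qquad z_{n,k}:=z_n\chi_{\{|u_n|\le k\}}, \]
which is already available at the approximation level since each $u_n\in BV(\Omega)\cap L^\infty(\Omega)$ solves \eqref{pbL1approx} in the sense of Definition \ref{weakdef}, hence also in the sense of Definition \ref{weakdefL1} by Remark \ref{remarkdefinizioni}. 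Since $\|z_n\|_{L^\infty(\Omega)^N}\le 1$, Banach--Alaoglu gives a subsequence with $z_n\rightharpoonup^* z$ in $L^\infty(\Omega)^N$ and $\|z\|_{L^\infty(\Omega)^N}\le 1$. Coupling the strong $L^1$ convergences $g(u_n)\to g(u)$ (from Lemma \ref{lemma_stimeL1}) and $f_n\to f$ with the approximate PDE $-\operatorname{div}z_n=f_n-g(u_n)$, I obtain $-\operatorname{div}z=f-g(u)\in L^1(\Omega)$ in $\mathcal{D}'(\Omega)$, which yields \eqref{esistenzaz1L1} and $z\in X(\Omega)_1$.

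To prove \eqref{esistenzaz2L1}, I fix $k>0$ and $0\le\varphi\in C^1_c(\Omega)$, and integrate the approximate identity against $\varphi$. By the representation \eqref{dist1} of the Anzellotti pairing, the left-hand side equals
\[ -\int_\Omega T_k(u_n)\varphi\,\operatorname{div}z_n-\int_\Omega T_k(u_n)z_n\cdot\nabla\varphi. \]
Using $|T_k(u_n)|\le k$ together with the a.e.\ convergence $T_k(u_n)\to T_k(u)$, the strong $L^1$ convergence of $\operatorname{div}z_n$, and the weak-$*$ $L^\infty$ convergence of $z_n$, this passes to $\int_\Omega(z,DT_k(u))\varphi$. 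On the right-hand side, the area functional is lower semicontinuous with respect to $L^1$ convergence, while $J_2$ from \eqref{j2} is weakly upper semicontinuous, producing
\[ \int_\Omega(z,DT_k(u))\varphi\ge\int_\Omega\sqrt{1+|DT_k(u)|^2}\varphi-\int_\Omega\sqrt{1-|z_k|^2}\varphi. \]

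The reverse pointwise inequality is then obtained by decomposing as measures. On the absolutely continuous part, Lemma \ref{lemanzas} and \eqref{ine:2} give $z\cdot D^aT_k(u)\le\sqrt{1+|D^aT_k(u)|^2}-\sqrt{1-|z_k|^2}$; here I use that $D^aT_k(u)=0$ a.e.\ on $\{|u|\ge k\}$ by Stampacchia's lemma and that $z_k=z$ on $\{|u|\le k\}$, so the inequality holds trivially off $\{|u|<k\}$. On the singular part, $(z,DT_k(u))^s\le|D^sT_k(u)|$ follows from $\|z\|_{L^\infty}\le 1$ and matches the singular part of the right-hand side (the term $\sqrt{1-|z_k|^2}$ being absolutely continuous). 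Integrating this pointwise measure bound against $\varphi$ and matching with the previous lower bound forces equality of integrals for every nonnegative $\varphi\in C^1_c(\Omega)$, hence equality of measures, which is \eqref{esistenzaz2L1}.

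The main obstacle I anticipate is the upper semicontinuity step for $z_{n,k}$: applying the weak upper semicontinuity of $J_2$ requires $z_{n,k}\rightharpoonup z_k$, which reduces to the strong $L^1$ convergence $\chi_{\{|u_n|\le k\}}\to\chi_{\{|u|\le k\}}$. This is automatic for a.e.\ $k>0$, namely those with $|\{|u|=k\}|=0$, but it must be handled for the at most countable set of exceptional thresholds. The key remark is that the approximate identity $z_n\cdot D^au_n=\sqrt{1+|D^au_n|^2}-\sqrt{1-|z_n|^2}$, combined with the Stampacchia property $D^au_n=0$ a.e.\ on $\{|u_n|=k\}$, forces $|z_n|=0$ a.e.\ on each level set, so that $z_{n,k}=z_n\chi_{\{|u_n|<k\}}$ a.e.; the identity at exceptional $k$ is then recovered by approximation through a sequence of admissible thresholds $k_j\to k$.
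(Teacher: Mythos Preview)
Your proposal follows essentially the same route as the paper: extract $z$ as a weak-$*$ limit of $z_n$, pass to the limit in the distributional equation using the $L^1$ convergences of $g(u_n)$ and $f_n$, then test with $T_k(u_n)\varphi$ and exploit lower semicontinuity of the area functional together with weak upper semicontinuity of $J_2$ to obtain the one-sided inequality, closing with \eqref{ine:2} and $\|z\|_{L^\infty}\le 1$ for the reverse bound.

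The one place where your argument is looser than the paper's is the treatment of the exceptional thresholds. Your ``key remark'' that $z_n=0$ a.e.\ on $\{|u_n|=k\}$ is correct but does not by itself yield the weak $L^1$ convergence $z_{n,k}\rightharpoonup z_k$ at exceptional $k$: the obstruction lives on $\{|u|=k\}$, not on $\{|u_n|=k\}$, and knowing $z_n$ vanishes on the latter says nothing about the behaviour of $z_n\chi_{\{|u_n|\le k\}}$ on the former. Your approximation-in-$k$ plan can be made to work (taking admissible $k_j\searrow k$ and combining the identity at $k_j$ with lower semicontinuity and the reverse inequality at $k$ forces convergence of the area term), but it needs those details. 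The paper proceeds more directly: once \eqref{esistenzaz2L1} is known for a.e.\ $k$, the explicit identification $z_k=D^aT_k(u)/\sqrt{1+|D^aT_k(u)|^2}$ follows as in Remark~\ref{remdef}; picking any admissible $h>k$ then gives $z=z_h=0$ a.e.\ on $\{|u|=k\}$ by Stampacchia applied to $T_h(u)$, and this is exactly what upgrades the weak convergence of $z_{n,k}$ (hence the identity) to \emph{every} $k>0$.
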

\begin{proof}
Let $u_n$ be the solution to \eqref{pbL1approx} given by Theorem \ref{teomain} with vector field $z_n$ such that $|z_n|\le 1$. Then there exists $z\in L^\infty(\Omega)^N$ such that $z_n$ converges to $z$ weak$^*$ in $L^\infty(\Omega)^N$ as $n\to\infty$ and such that $||z||_{L^\infty(\Omega)^N} \le 1$. Then, recalling that from Lemma \ref{lemma_stimeL1} $g(u_n)$ converges, up to subsequences, to $g(u)$ in $L^1(\Omega)$ as $n\to\infty$, it is easy to prove that \eqref{esistenzaz1L1} holds since $f_n$ converges to $f$	in $L^1(\Omega)$.

Now in order to prove \eqref{esistenzaz2L1} one can take $v=T_k(u_n)\varphi$ ($k>0$ and $0\le\varphi\in C^1_c(\Omega)$) in \eqref{def_distrp=1approx} getting to

\begin{equation*}
\int_\Omega g(u_n) T_k(u_n)\varphi -\int_\Omega T_k(u_n)\varphi\operatorname{div} z_n = \int_\Omega  f_nT_k(u_n)\varphi,
\end{equation*}
that, using \eqref{dist1}, gives

\begin{equation}\label{campozTK}
	\int_\Omega (z_n,D T_k(u_n))\varphi =  \int_\Omega  (f_n-g(u_n))T_k(u_n)\varphi - \int_\Omega z_n\cdot\nabla \varphi T_k(u_n).
\end{equation}
Then, recalling  Remark \ref{remarkdefinizioni}, one has  that,  for every $k>0$
	\begin{equation}\label{esistenzaz2TK}
 (z_n,DT_k(u_n))=\sqrt{1+|DT_k(u_n)|^2} - \sqrt{1-|z_n|^2\chi_{\{|u_n|\le k\}}},\ \text{as measures in $\Omega$},
 \end{equation}
which gathered in \eqref{campozTK} yields to 
\begin{equation*}
	\int_\Omega \sqrt{1+|D T_k(u_n)|^2}\varphi -\int_\Omega \sqrt{1-|z_n|^2\chi_{\{|u_n|\le k\}}}\varphi =  \int_\Omega  (f_n-g(u_n))T_k(u_n)\varphi - \int_\Omega z_n\cdot\nabla \varphi T_k(u_n).
\end{equation*}
Now one can let $n$ go to $\infty$ in the previous identity recalling that the left-hand is lower semicontinuous as already shown in the proof of Lemma \ref{lemma_esistenzazL2}. In particular, for the second term on the left-hand one uses that $z_n\chi_{\{|u_n|\le k\}}$ converges to $z\chi_{\{|u|\le k\}}$ weakly in $L^1(\Omega)^N$ as $n\to\infty$, for almost every $k>0$. 
Moreover the first term on the right-hand simply passes to the limit since $f_n$,$g(u_n)$ converge in $L^1(\Omega)$ and $T_k(u_n)$ converges weak$^*$ in $L^\infty(\Omega)$. Finally, the last term easily pass to the limit as  $T_k(u_n)$ converges in $L^1(\Omega)$  and $z_n$ converges weak$^*$ in $L^\infty(\Omega)^N$.

  This argument takes to (recall that $z_k:=z\chi_{\{|u|\le k\}}$)

\begin{equation*}
	\begin{aligned}
	\int_\Omega \sqrt{1+|D T_k(u)|^2}\varphi -\int_\Omega \sqrt{1-|z_k|^2}\varphi &\le  \int_\Omega  (f-g(u))T_k(u)\varphi - \int_\Omega z\cdot\nabla \varphi T_k(u)
	\\
	&= -\int_\Omega \operatorname{div}z T_k(u)\varphi - \int_\Omega z\cdot\nabla \varphi T_k(u)
	\\
	&=\int_\Omega (z,DT_k(u))\varphi,
	\end{aligned}
\end{equation*}
where the last passages follow from \eqref{esistenzaz1L1} and \eqref{dist1} respectively. From now on the reasoning to deduce \eqref{esistenzaz2L1} is similar to the one given in the proof of Lemma \ref{lemma_esistenzazL2}. Indeed it is sufficient to observe that $z\cdot D^aT_k(u) = z_k\cdot D^aT_k(u)$. This shows \eqref{esistenzaz2L1} for almost every $k>0$. Now observe that, reasoning as in Remark \ref{remdef}, from \eqref{esistenzaz2L1} one readily gets
\begin{equation}\label{zesplicitok}
	z_k= \frac{D^a T_k(u)}{\sqrt{1+|D^a T_k(u)|^2}},
\end{equation}
for almost every $k>0$. We claim that, for any fixed $k>0$, $z=0$ almost everywhere in $\{|u|=k\}$. If this is the case, then $z_n\chi_{\{|u_n|\le k\}}$ converges to $z\chi_{\{|u|\le k\}}$ weakly in $L^1(\Omega)^N$ as $n\to\infty$, for every $k>0$ and this concludes the proof. 
Let us finally check the claim; let us fix $h>k$ such that \eqref{zesplicitok} holds for $z_h=z\chi_{\{|u|\le h\}}$. Then, since $z_h=0$ almost everywhere in $\{|u|=k\}$, also $z=0$ almost everywhere on the same set.

\end{proof}
\begin{remark}\label{benilan}
Looking at \eqref{zesplicitok} one would like to identify $z$
 as
\begin{equation}\label{ben}
 z= \frac{D^a u}{\sqrt{1+|D^a u|^2}}\,.
 \end{equation}
 This is not accurate as we only ask for $u$ to have truncations in $BV(\Omega)$ so that $D^a u$ is not well defined in general. 
 
 Nevertheless,  reasoning as in \cite{b6} it is possible to define (see for instance \cite[Lemma 1]{ACM_MA}\bk) a generalized gradient for functions whose truncation is in  $BV(\Omega)$  for which\bk,  in turn, \eqref{ben} holds a.e. in $\Omega$. 
Indeed, let $u$ be a measurable function finite a.e. on $\Omega$ such that $T_k(u)\in BV(\Omega)$ for any $k>0$. Then $D^a T_k(u)$ is well defined for any $k>0$. A standard argument allows us to select a unique measurable vector function $v:\Omega\to\rn$ that satisfies 
$$
v_{\chi_{\{|u|\leq k\}}} = D^a T_k(u). 
$$

It is possible to show that, if $u\in BV(\Omega)$, then $v=D^a u$. 

Using this generalized gradient, the vector field $z$ given in Definition \ref{weakdefL1} can be uniquely identified by \eqref{ben}.  
\end{remark}

\begin{remark}\label{conv_zn}
	For subsequent use, we underline that in the previous proof we have shown that  $z_n\chi_{\{|u_n|\le k\}}$ converges to $z\chi_{\{|u|\le k\}}$ weakly in $L^1(\Omega)^N$ as $n\to\infty$ and for every $k>0$.
\end{remark}

\begin{lemma}\label{lemma_datoalbordoL1}
	Let $f\in L^1(\Omega)$ and let $g$ satisfy \eqref{condg}. Then it holds for any $k>0$
	$$T_k(u)(\sgn{T_k(u)} + [z,\nu])(x)=0 \ \text{for  $\mathcal{H}^{N-1}$-a.e. } x \in \partial\Omega,$$
	where $u$ and $z$ are the function and the vector field found in Lemma \ref{lemma_stimeL1} and in Lemma \ref{lemma_esistenzaL1}.	
\end{lemma}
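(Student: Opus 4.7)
The plan is to mimic the strategy of Lemma \ref{lemma_datoalbordoL2}, but now testing the approximating equation \eqref{def_distrp=1approx} with $v=T_k(u_n)$ instead of $u_n$ itself, since $u$ is not globally $BV$. This test, combined with Green's formula and the approximating boundary condition \eqref{def_bordop=1approx} (which forces $T_k(u_n)[z_n,\nu]=-|T_k(u_n)|$ on $\partial\Omega$, as one checks by splitting according to the sign of $u_n$), yields
\begin{equation*}
\int_\Omega T_k(u_n)(g(u_n)-f_n)+\int_\Omega\sqrt{1+|DT_k(u_n)|^2}-\int_\Omega\sqrt{1-|z_n|^2\chi_{\{|u_n|\le k\}}}+\int_{\partial\Omega}|T_k(u_n)|\,d\mathcal H^{N-1}=0,
\end{equation*}
after invoking \eqref{def_zp=1approx} in the form recalled in Remark \ref{remarkdefinizioni}.

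Next I would take the $\liminf$ as $n\to\infty$. The strong $L^1$ convergence of $g(u_n)$ (Lemma \ref{lemma_stimeL1}) and of $f_n$, together with the weak* $L^\infty$ convergence of $T_k(u_n)$ to $T_k(u)$, handle the zero-order terms. The bulk plus boundary contribution $\int_\Omega\sqrt{1+|DT_k(u_n)|^2}+\int_{\partial\Omega}|T_k(u_n)|\,d\mathcal H^{N-1}$ is exactly $J_1(T_k(u_n))$ with $\varphi\equiv 1$, hence lower semicontinuous with respect to the $L^1$ convergence of $T_k(u_n)\to T_k(u)$ recalled in the preliminaries. Finally, using Remark \ref{conv_zn}, $z_n\chi_{\{|u_n|\le k\}}\rightharpoonup z_k$ weakly in $L^1(\Omega)^N$, so the upper semicontinuity of $J_2$ in \eqref{j2} (with $\varphi\equiv 1$) gives
\begin{equation*}
\int_\Omega T_k(u)(g(u)-f)+\int_\Omega\sqrt{1+|DT_k(u)|^2}-\int_\Omega\sqrt{1-|z_k|^2}+\int_{\partial\Omega}|T_k(u)|\,d\mathcal H^{N-1}\le 0.
\end{equation*}

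To extract the boundary information, I would then use that, by \eqref{esistenzaz1L1}, $\operatorname{div}z=g(u)-f\in L^1(\Omega)$ and $T_k(u)\in BV(\Omega)\cap L^\infty(\Omega)$, so the compatibility \eqref{cc3} allows Green's formula: $\int_\Omega T_k(u)(g(u)-f)=-\int_\Omega(z,DT_k(u))+\int_{\partial\Omega}T_k(u)[z,\nu]\,d\mathcal H^{N-1}$. Plugging in \eqref{esistenzaz2L1} to rewrite $(z,DT_k(u))$ and substituting above, the bulk $\sqrt{1+|DT_k(u)|^2}$ and $\sqrt{1-|z_k|^2}$ terms cancel exactly, leaving
\begin{equation*}
\int_{\partial\Omega}T_k(u)\bigl([z,\nu]+\sgn T_k(u)\bigr)\,d\mathcal H^{N-1}\le 0.
\end{equation*}
Since $\|[z,\nu]\|_{L^\infty(\partial\Omega)}\le 1$, the integrand is pointwise nonnegative $\mathcal H^{N-1}$-a.e., so equality must hold pointwise, proving the claim.

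The main obstacle is the passage to the limit in the boundary term: traces are not stable under mere $L^1$ convergence of $BV$ functions, and this is precisely why I package $\int_{\partial\Omega}|T_k(u_n)|\,d\mathcal H^{N-1}$ together with the area functional via $J_1$, exploiting the joint lower semicontinuity recalled in Section \ref{due}. A secondary care is needed for the weak convergence $z_n\chi_{\{|u_n|\le k\}}\rightharpoonup z\chi_{\{|u|\le k\}}$ at an \emph{arbitrary} level $k$ rather than almost every $k$; this however has already been secured in Remark \ref{conv_zn}, so no extra work is needed here.
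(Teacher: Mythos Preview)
Your proposal is correct and follows essentially the same route as the paper: test \eqref{def_distrp=1approx} with $T_k(u_n)$, apply Green's formula and \eqref{def_bordop=1approx} to obtain the identity \eqref{bordoL1_2}, pass to the $\liminf$ via the lower semicontinuity of $J_1$ and the upper semicontinuity of $J_2$ (using Remark \ref{conv_zn}), and then undo the limit identity through \eqref{esistenzaz1L1}, Green's formula \eqref{green} under compatibility \eqref{cc3}, and \eqref{esistenzaz2L1}. Your write-up is in fact slightly more explicit than the paper's in naming the semicontinuity tools and the compatibility condition, but there is no substantive difference in strategy.
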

\begin{proof}
	 Let $u_n$ be the solution of \eqref{pbL1approx} given by Theorem \ref{teomain}. Let us pick $v=T_k(u_n)$ in \eqref{pbL1approx} yielding to
	\begin{equation*}
		\int_\Omega g(u_n)T_k(u_n) - \int_{\Omega} \operatorname{div}z_n T_k(u_n) =  \int_{\Omega}  f_nT_k(u_n),	
	\end{equation*}		
	and, after an application of the \eqref{green}, to 
	\begin{equation}\label{bordoL1_1}
		\int_{\Omega} (z_n,DT_k(u_n)) - \int_{\partial\Omega} T_k(u_n)[z_n,\nu] =  \int_{\Omega}  (f_n-g(u_n))T_k(u_n).	
	\end{equation}	
	Now using both \eqref{esistenzaz2TK} and \eqref{def_bordop=1approx}, it follows from \eqref{bordoL1_1} that
	\begin{equation}\label{bordoL1_2}
	\int_{\Omega} \sqrt{1+|DT_k(u_n)|^2} - \int_\Omega \sqrt{1-|z_n|^2\chi_{\{|u_n|\le k\}}} + \int_{\partial\Omega} |T_k(u_n)| =  \int_{\Omega}  (f_n-g(u_n))T_k(u_n).	
	\end{equation}		
	Recalling also Remark \ref{conv_zn}, we can take $n\to\infty$ by lower semicontinuity of the left-hand of \eqref{bordoL1_2}. For the right-hand it is  sufficient to use  the strong convergence of both $f_n$ and $g(u_n)$ in $L^1(\Omega)$ and the $*$-weak convergence in $L^\infty(\Omega)$ of $T_k(u_n)$ as $n\to\infty$. Then one deduces
	
	\begin{equation}\label{bordoL1_3}
		\int_{\Omega} \sqrt{1+|DT_k(u)|^2} - \int_\Omega \sqrt{1-|z_k|^2} + \int_{\partial\Omega} |T_k(u)| \le  \int_{\Omega}  (f-g(u))T_k(u).	
	\end{equation}
	Now observe that from \eqref{esistenzaz1L1} one has that $(f-u)T_k(u)=-T_k(u)\operatorname{div}z$. Then an application of \eqref{green} in \eqref{bordoL1_3} gives
	\begin{equation*}
	\int_{\Omega} \sqrt{1+|DT_k(u)|^2} - \int_\Omega \sqrt{1-|z_k|^2} + \int_{\partial\Omega} |T_k(u)| \le   \int_{\Omega}  (z,DT_k(u)) - \int_{\partial\Omega} T_k(u)[z,\nu],	
	\end{equation*}	
	which, from \eqref{esistenzaz2L1}, implies
	\begin{equation*}
		\int_{\partial\Omega} |T_k(u)| + \int_{\partial\Omega} T_k(u)[z,\nu] \le 0,	
	\end{equation*}
	and this concludes the proof since $|[z,\nu]|\le 1$ $\mathcal{H}^{N-1}-$a.e. on $\partial \Omega$.
\end{proof}

\begin{proof}[Proof of Theorem \ref{teoL1}]
	Let $u_n$ be the solution to \eqref{pbL1approx} given by Theorem \ref{teomain}. It follows from Lemma \ref{lemma_stimeL1} that $T_k(u_n)$ is bounded in $BV(\Omega)$ with respect to $n$ and for any $k>0$. In the same lemma it is shown that $u_n$ converges, up to subsequences, as $n\to\infty$ almost everywhere in $\Omega$ to a function $u$, which is almost everywhere finite. Moreover $g(u_n)$  converges to $g(u)$ in $L^1(\Omega)$ as $n\to\infty$.
	Requests \eqref{def_distrp=1L1} and \eqref {def_zp=1L1} are proved in Lemma \ref{lemma_esistenzaL1}. The boundary condition \eqref{def_bordop=1L1} is shown in Lemma \ref{lemma_datoalbordoL1}. The proof is concluded.
\end{proof}

\subsection{Uniqueness of infinite energy solutions}

In this section we prove the uniqueness Theorem \ref{teomainunique} by strictly following the lines of the proof of Theorem \ref{teomainuniqueL2}.

\begin{proof}[Proof of Theorem \ref{teomainunique}]
	Let $u_1$ and $u_2$ be solutions to \eqref{pb} and let $z_1$ and $z_2$ be the corresponding vector fields.
	
	Then one has that
	\begin{equation}\label{test}
		\int_\Omega g(u_i) v -\int_\Omega v\,\operatorname{div} z_i = \int_\Omega  fv, \ \ \forall v\in BV(\Omega)\cap L^\infty(\Omega), \ \ i=1,2.
	\end{equation}	
	Let us observe that the main difference with respect to the proof of Theorem \ref{teomainuniqueL2} relies on the fact that $u_1,u_2$ are not suitable test functions in \eqref{test} anymore.
	
	Hence we have to take $v=T_k(u_1) - T_k(u_2)$ in the difference between two weak formulations \eqref{test} related to $u_1$ and $u_2$, yielding to
	
	\begin{equation}\label{unique1}
		\begin{aligned}
			&\int_\Omega (g(u_1)-g(u_2))(T_k(u_1) - T_k(u_2)) + \int_\Omega (z_1, DT_k(u_1))-\int_\Omega(z_2, DT_k(u_1))
			\\
			&+ \int_\Omega(z_2, DT_k(u_2)) - \int_\Omega(z_1, DT_k(u_2)) - \int_{\partial\Omega}(T_k(u_1)-T_k(u_2))[z_1,\nu]\, d\mathcal H^{N-1}				    	
			\\
			&+\int_{\partial\Omega}(T_k(u_1)-T_k(u_2))[z_2,\nu]\, d\mathcal H^{N-1}						    	
			= 0,
		\end{aligned}
	\end{equation}
	
	where we also used \eqref{green}.
	
	From \eqref{def_bordop=1L1} one has
	$$
	T_k(u_{i})(\sgn{T_k(u_i)}+[z_{i},\nu])=0\ \ \mathcal H^{N-1}-\text{a.e. on}\ \partial\Omega\  \ \text{for}\ \  i=1,2.
	$$
	
This means that
	
	\begin{equation}\label{boundaryunique}
		\begin{aligned}
			&-\int_{\partial\Omega}(T_k(u_1)-T_k(u_2))[z_1,\nu]\, d\mathcal H^{N-1}		+ \int_{\partial\Omega}(T_k(u_1)-T_k(u_2))[z_2,\nu]\, d\mathcal H^{N-1}
			\\
			&= 	\int_{\partial\Omega}(|T_k(u_1)|  + T_k(u_2)[z_1,\nu])\, d\mathcal H^{N-1}	+ \int_{\partial\Omega}(|T_k(u_2)| + T_k(u_1)[z_2,\nu])\, d\mathcal H^{N-1}	
			\\
			&= 	\int_{\partial\Omega}(|T_k(u_1)|  + T_k(u_1)[z_2,\nu])\, d\mathcal H^{N-1}	+ \int_{\partial\Omega}(|T_k(u_2)| + T_k(u_2)[z_1,\nu])\, d\mathcal H^{N-1},	
		\end{aligned}
	\end{equation}
	which is nonnegative since $|[z_i,\nu]|\le 1$ for $i=1,2$.
	
	Gathering \eqref{boundaryunique} into \eqref{unique1}, it  holds that
	
	\begin{equation*}
		\begin{aligned}
			\int_{\{|u_1|\le k,|u_2|\le k\}} (g(u_1)-g(u_2))(u_1-u_2) &+ \int_\Omega (z_1, DT_k(u_1))-\int_\Omega(z_2, DT_k(u_1))
			\\
			&+ \int_\Omega(z_2, DT_k(u_2)) - \int_\Omega(z_1, DT_k(u_2))  \le 0.
		\end{aligned}
	\end{equation*}
	
	Moreover, using \eqref{def_zp=1L1}, one gets ($z_{i,k}:= z_i\chi_{\{|u_i|\le k\}}$ for $i=1,2$)
		
		\begin{equation*}
			\begin{aligned}
				&\int_{\{|u_1|\le k,|u_2|\le k\}} (g(u_1)-g(u_2))(u_1-u_2) + \int_\Omega\sqrt{1+|DT_k(u_1)|^2} - \int_\Omega \sqrt{1-|z_{1,k}|^2} - \int_\Omega(z_2, DT_k(u_1))
				\\
				&+ \int_\Omega \sqrt{1+|DT_k(u_2)|^2} - \int_\Omega \sqrt{1-|z_{2,k}|^2} - \int_\Omega(z_1, DT_k(u_2))  \le 0.
			\end{aligned}
		\end{equation*}

		Now we  claim  that both
		$$\sqrt{1+|DT_k(u_1)|^2} - \sqrt{1-|z_{2,k}|^2}  \ge (z_2,DT_k(u_1))$$
		and 
		$$\sqrt{1+|DT_k(u_2)|^2} - \sqrt{1-|z_{1,k}|^2}  \ge (z_1,DT_k(u_2))\,$$
		hold as measures in $\Omega$.
		Once again, this follows by splitting it in the absolutely continuous and singular parts.
		For the absolutely continuous part of the measures one needs that
		$$\sqrt{1+|D^a T_k(u_1)|^2} - \sqrt{1-|z_{2,k}|^2}  \ge (z_2,DT_k(u_1))^a=z_2\cdot D^a T_k(u_1)$$
		and that
		$$\sqrt{1+|D^a T_k(u_2)|^2} - \sqrt{1-|z_{1,k}|^2}  \ge (z_1,DT_k(u_2))^a=z_1\cdot D^a T_k(u_2),$$
		which is inequality \eqref{ine:2} once one notices that $z_2\cdot D^a T_k(u_1)=z_{2,k}\cdot D^aT_k(u_1)$ and $z_1\cdot D^a T_k(u_2)=z_{1,k}\cdot D^aT_k(u_2)$.
	
	For the singular part it is sufficient to recall that $||z_i||_{L^\infty(\Omega)^N}\le 1$.
	
	This proves that
	
	\begin{equation*}
		\begin{aligned}
			\int_{\{|u_1|\le k,|u_2|\le k\}} (g(u_1)-g(u_2))(u_1-u_2)\le0,
		\end{aligned}
	\end{equation*}
for any $k>0$. The proof is concluded as $g$ is increasing.
\end{proof}

\section{Finite energy and unbounded solutions}
\label{boun}

So far we have shown the existence of a bounded $BV$-solution when the datum $f$ lies in $L^N(\Omega)$ (Theorem \ref{teomain}) while we proved the existence of an infinite energy solution when $f$ is merely integrable (Theorem \ref{teoL1}). 
One could wonder what happens to the solution's regularity when the datum $f$ is in between these two extreme cases.

\medskip

In particular, in the next result, we consider data lying in the Marcinkiewicz space $L^{N,\infty}(\Omega)$; again, let us refer to the monograph \cite{PKF} for an introduction and basic properties.
Among other things let us recall that for functions in $BV(\Omega)$ the natural embedding is in the Lorentz space $L^{\frac{N}{N-1},1}(\Omega)$ (see \cite{PKF}) where the best Sobolev constant is given by $\tilde{\mathcal S}_{1}= [(N-1)\omega_{N}^{\frac{1}{N}}]^{-1}$. 

\medskip

Let us state a first regularity result, in which we prove that a bounded solution exists provided the $L^{N,\infty}$-norm \bk of the datum is small enough. This result shows how the absorption given by a general $g$ only satisfying \eqref{condg} is to weak to infer boundedness for any data in the Marcinkiewicz space and it fits with the result in \cite{gop2}.

\begin{theorem}\label{teo_reg}
	Let $f\in L^{N,\infty}(\Omega)$ with $\|f\|_{L^{N,\infty}(\Omega)}<\tilde{\mathcal S}^{-1}_{1}$ and let $g$ satisfy \eqref{condg}. Then there exists a bounded solution $u$ to \eqref{pb} in the sense of Definition \ref{weakdef} (with $\operatorname{div} z$ belongs to $L^{N,\infty}(\Omega)$ in place of $L^N(\Omega)$).
\end{theorem}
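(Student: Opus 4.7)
The plan is to mimic the proof of Theorem \ref{teomain} using the same $p$-Laplacian approximation scheme \eqref{pbp}, with the crucial modification that the uniform $L^\infty$ estimate on $u_p$ (Lemma \ref{lem_bounded}) is now obtained from the smallness of $\|f\|_{L^{N,\infty}(\Omega)}$ rather than from the absorption: since $g$ is allowed to be, say, bounded, the smallness hypothesis plays the exact role of the one in \eqref{giac}.

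First, I would take $G_k(u_p) \in W^{1,p}_0(\Omega)$ as a test function in \eqref{pbpw} and, dropping the nonnegative absorption and $p$-Laplacian terms, obtain as in \eqref{bounded2bis} and \eqref{bounded1} the estimate
\[
\int_\Omega |\nabla G_k(u_p)| - |\{|u_p|>k\}| \le \int_\Omega f_p G_k(u_p).
\]
Since $G_k(u_p)$ has zero trace, I can apply the H\"older inequality in Lorentz spaces followed by the sharp Sobolev embedding $W^{1,1}_0(\Omega)\hookrightarrow L^{N/(N-1),1}(\Omega)$ to write
\[
\int_\Omega f_p G_k(u_p) \le \|f\|_{L^{N,\infty}(\Omega)} \, \|G_k(u_p)\|_{L^{N/(N-1),1}(\Omega)} \le \|f\|_{L^{N,\infty}(\Omega)}\, \tilde{\mathcal S}_1 \int_\Omega |\nabla G_k(u_p)|.
\]
The smallness assumption $\|f\|_{L^{N,\infty}(\Omega)} \tilde{\mathcal S}_1 < 1$ then lets me absorb the gradient term on the right-hand side, yielding
\[
\int_\Omega |\nabla G_k(u_p)| \le \frac{|\{|u_p|>k\}|}{1-\|f\|_{L^{N,\infty}(\Omega)} \tilde{\mathcal S}_1}.
\]
A further application of the Sobolev inequality and the standard Stampacchia iteration (exactly as in \eqref{bounded7}--\eqref{bounded8}) yields a bound $\|u_p\|_{L^\infty(\Omega)} \le C$ uniform in $p$.

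Once the $L^\infty$ estimate is available, the uniform $BV$ bound and \eqref{stimatermineenergia} follow verbatim from the proof of Lemma \ref{lemma_stimeLN}, so that $u_p \to u$ in $L^q(\Omega)$ for every $q<\infty$ to some $u\in BV(\Omega)\cap L^\infty(\Omega)$, and there exists $z\in L^\infty(\Omega)^N$ with $\|z\|_{L^\infty(\Omega)^N}\le 1$ which is the weak-$*$ limit of the vector fields $\nabla u_p/\sqrt{1+|\nabla u_p|^2}$. Passing to the limit in the distributional formulation of \eqref{pbp} as in Lemma \ref{lemma_esistenzazL2} yields \eqref{def_distrp=1}; the identity
\[
\operatorname{div} z = g(u) - f \in L^{N,\infty}(\Omega),
\]
since $g(u)\in L^\infty(\Omega)$ and $f\in L^{N,\infty}(\Omega)$. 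This is precisely the compatibility condition recalled after \eqref{cc3}, which guarantees that the Anzellotti pairings, the Green formula \eqref{green} and Lemma \ref{lemanzas} remain available.

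The verification of \eqref{def_zp=1} and of the boundary condition \eqref{def_bordop=1} then proceeds exactly as in Lemmas \ref{lemma_esistenzazL2} and \ref{lemma_datoalbordoL2}, the only difference being that the identity \eqref{moltperu} now reads $-u \operatorname{div} z = (f-g(u))u$ with $u\in L^\infty(\Omega)$ and $\operatorname{div} z\in L^{N,\infty}(\Omega)$, so the integrability of the product is preserved. I expect the main obstacle to lie in the $L^\infty$ step: the key point is that the smallness hypothesis is formulated with respect to the \emph{sharp} constant $\tilde{\mathcal S}_1$ of the embedding into $L^{N/(N-1),1}(\Omega)$ (rather than $\mathcal S_1$), which matches the one governing the admissible size of the datum when no absorption is present, and this is the natural reason for which the result is expected to be sharp.
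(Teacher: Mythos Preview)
Your proposal is correct and matches the paper's approach: the paper's own proof is the one-line remark that the argument for Theorem \ref{teomain} goes through once the Sobolev inequality into $L^{\frac{N}{N-1},1}(\Omega)$ (with constant $\tilde{\mathcal S}_1$) replaces the standard one, and you have spelled out precisely this modification in the $L^\infty$ step, correctly dropping the splitting \eqref{bounded2} (which fails for Marcinkiewicz data) in favor of the smallness hypothesis. The parenthetical ``since $g$ is allowed to be, say, bounded'' is a slip---$g$ does satisfy \eqref{condg}---but your argument never uses it, and the moral you are drawing (that for $L^{N,\infty}$ data the absorption can no longer supply the $L^\infty$ bound) is exactly the content of the remark the paper places right after Theorem \ref{teo_reg}.
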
  
\begin{proof}
	The proof strictly follows the lines of the one of Theorem \ref{teomain} once that one uses the Sobolev inequality in $L^{\frac{N}{N-1},1}(\Omega)$.  
\end{proof}

\begin{remark} As a technical remark,  let us stress that, in the previous theorem, the smallness condition on the norm of $f$ is needed since we are not anymore able, in general, to fix $h$ great enough in order to deduce \eqref{hgrande} as in the proof of lemma \ref{lem_bounded} when $f\in L^{N,\infty}(\Omega)$. This is not only  a technical  obstruction, in fact in Example \ref{example} below we will show that unbounded solutions could exist if $f\in L^{N,\infty}(\Omega)$ with $\|f\|_{L^{N,\infty}(\Omega)}=\tilde{\mathcal S}^{-1}_{1}$ yielding optimality to the result of Theorem \ref{teo_reg}. 
 \end{remark} 
In the next regularity result we show how the existence of a finite energy solution  can be proven, also below the critical threshold $N$ for the datum,  provided some stronger growth assumption on $g$ is required. 

\begin{theorem}\label{teo_reg2}
	Let us assume that there exist $c_0>0$ and $q>1$ such that $g(s)s\ge c_0|s|^{q}$ for any $s\in \mathbb{R}$. Moreover let $f\in L^{q'}(\Omega)$. Then there exists a solution $u\in BV(\Omega)$ to \eqref{pb} in the sense of Definition \ref{weakdefL1} such that $u\in L^{q}(\Omega)$.
\end{theorem}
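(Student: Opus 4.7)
\textbf{Proof plan for Theorem \ref{teo_reg2}.} The strategy is to mimic the $L^1$ approximation scheme of Section \ref{secL1}, exploiting the coercivity $g(s)s\ge c_0|s|^q$ to upgrade the truncated $BV$-bound of Lemma \ref{lemma_stimeL1} to a global $BV\cap L^q$ estimate. Let $f_n:=T_n(f)$; since $f\in L^{q'}(\Omega)$, the dominated convergence theorem gives $f_n\to f$ strongly in $L^{q'}(\Omega)$, and since $f_n\in L^\infty(\Omega)\subset L^N(\Omega)$, Theorem \ref{teomain} produces a bounded $BV$-solution $u_n$ of \eqref{pbL1approx} with associated vector field $z_n$ satisfying \eqref{def_distrp=1approx}--\eqref{def_bordop=1approx}.

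The heart of the matter is the a priori estimate. Using that $u_n\in BV(\Omega)\cap L^\infty(\Omega)$, the compatibility condition \eqref{cc3} allows me to plug $v=u_n$ into \eqref{def_distrp=1approx} and apply Green's formula \eqref{green}, giving
\[\int_\Omega g(u_n)u_n+\int_\Omega(z_n,Du_n)-\int_{\partial\Omega}u_n[z_n,\nu]\,d\mathcal H^{N-1}=\int_\Omega f_nu_n.\]
The boundary identity \eqref{def_bordop=1approx} converts the boundary term into $\int_{\partial\Omega}|u_n|\,d\mathcal H^{N-1}$, while \eqref{def_zp=1approx} and the pointwise bounds $\sqrt{1+|Du_n|^2}\ge|Du_n|$ (as measures) and $\sqrt{1-|z_n|^2}\le 1$ yield
\[c_0\int_\Omega|u_n|^q+\|u_n\|_{BV(\Omega)}\le|\Omega|+\int_\Omega f_nu_n.\]
By H\"older's and Young's inequalities the right-hand side is bounded by $|\Omega|+\tfrac{c_0}{2}\|u_n\|_{L^q(\Omega)}^q+C\|f\|_{L^{q'}(\Omega)}^{q'}$, and absorbing gives uniform control of both $\|u_n\|_{L^q(\Omega)}$ and $\|u_n\|_{BV(\Omega)}$.

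From these bounds I extract a subsequence such that $u_n\to u$ a.e.\ and in $L^1(\Omega)$, with $u\in BV(\Omega)\cap L^q(\Omega)$, and $z_n\stackrel{*}{\rightharpoonup}z$ in $L^\infty(\Omega)^N$ with $\|z\|_{L^\infty(\Omega)^N}\le 1$. Strong $L^1$-convergence of $g(u_n)$ to $g(u)$ follows verbatim from the equi-integrability argument in Lemma \ref{lemma_stimeL1} based on \eqref{equint}, which carries over unchanged since \eqref{condg} is a standing hypothesis. Passing to the limit in \eqref{def_distrp=1approx} then yields the distributional identity \eqref{def_distrp=1L1} and $\operatorname{div} z\in L^1(\Omega)$, so $z\in X(\Omega)_1$.

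The verification of the measure identity \eqref{def_zp=1L1} and the boundary condition \eqref{def_bordop=1L1} follow from the arguments of Lemmas \ref{lemma_esistenzaL1} and \ref{lemma_datoalbordoL1} with no substantive change: one tests with $T_k(u_n)\varphi$ (resp.\ $T_k(u_n)$), exploits the lower semicontinuity of $v\mapsto\int\sqrt{1+|Dv|^2}\,\varphi$ and the upper semicontinuity of $v\mapsto\int\sqrt{1-|v|^2}\,\varphi$ recalled in \eqref{j2}, uses the weak $L^1$-convergence of $z_n\chi_{\{|u_n|\le k\}}$ to $z\chi_{\{|u|\le k\}}$ for a.e.\ $k>0$ established in Remark \ref{conv_zn}, and the $L^1$ convergence of $f_n$ and $g(u_n)$. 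The only potentially delicate point, and main novelty over Section \ref{secL1}, is the step above in which $u_n$ itself (rather than $T_k(u_n)$) is used as a test function to obtain a full $BV\cap L^q$ estimate; this is legitimate because $u_n$ is bounded so \eqref{cc3} applies, and it is the absorption growth $g(s)s\ge c_0|s|^q$ that makes the right-hand side absorbable despite the loss of regularity $L^{q'}\subsetneq L^N$ (when $q>N/(N-1)$).
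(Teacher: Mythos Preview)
Your proposal is correct and follows essentially the same route as the paper: test the approximating equation with $u_n$ itself (legitimate since $u_n\in BV(\Omega)\cap L^\infty(\Omega)$), use \eqref{green}, \eqref{def_bordop=1approx} and \eqref{def_zp=1approx} to arrive at \eqref{stima_reg}, then apply Young's inequality with exponents $(q,q')$ and the coercivity $g(s)s\ge c_0|s|^q$ to absorb and obtain a uniform $BV\cap L^q$ bound. The paper is simply terser, invoking Theorem \ref{teoL1} for the existence part and only supplying the new estimate; your more detailed account of the passage to the limit is consistent with that argument.
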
  
\begin{proof}
	In Theorem \ref{teoL1} we have found the solution $u$ to \eqref{pb} as the almost everywhere limit (up to subsequences) in $n$ of $u_n\in BV(\Omega)\cap L^\infty(\Omega)$ solution to \eqref{pbL1approx}. Hence, to show that $u\in BV(\Omega)$, we only need to show that $u_n$ is bounded in $BV(\Omega)\cap L^{q}(\Omega)$ with respect to $n$.
	
	\medskip
	
	To this aim we take $u_n$ as a test function in \eqref{def_distrp=1approx}, yielding to
	\begin{equation*}
		\int_\Omega g(u_n) u_n - \int_\Omega u_n\,\operatorname{div} z_n =\int_\Omega f_n u_n,
	\end{equation*}
	which, recalling \eqref{green} and \eqref{def_bordop=1approx}, implies
	\begin{equation*}
		\int_\Omega g(u_n) u_n + \int_\Omega (z_n, D u_n) + \int_{\partial\Omega} |u_n| \ d \mathcal{H}^{N-1} = \int_\Omega f_n u_n.
	\end{equation*}
	Hence, from \eqref{def_zp=1approx}, one has
	\begin{equation}\label{stima_reg}
		\int_\Omega g(u_n) u_n + \int_\Omega \sqrt{1 + |D u_n|^2} - \int_\Omega \sqrt{1-|z_n|^2} + \int_{\partial\Omega} |u_n| \ d \mathcal{H}^{N-1}= \int_\Omega f_n u_n.
	\end{equation}
From \eqref{stima_reg} and after an application of the Young inequality, one obtains
	
	\begin{equation}\label{stima_reg3}
		\begin{aligned}
		&c_0\int_\Omega |u_n|^{q} + \int_\Omega \sqrt{1 + |D u_n|^2} - \int_\Omega \sqrt{1-|z_n|^2} + \int_{\partial\Omega} |u_n| \ d \mathcal{H}^{N-1} 
		\\
		&\le C\|f\|^{q'}_{L^{q'}(\Omega)} + \frac{c_0}{2}\int_\Omega |u_n|^{q}.
		\end{aligned}
	\end{equation}
	From \eqref{stima_reg3} it is simple to convince that $u_n$ is bounded in both $BV(\Omega)$ and $L^{q}(\Omega)$ with respect to $n$. The proof is concluded.
\end{proof}

Let us now summarize some of the results proven so far. If $f\in L^N(\Omega)$ then Theorem \ref{teomain} gives the existence of a bounded solution $u$ to \eqref{pb} in the sense of Definition \ref{weakdef}.

As we have just seen in Theorem \ref{teo_reg}, we can enlarge the set of admissible data to $L^{N,\infty}(\Omega)$ and still deducing existence of  bounded $BV$-solution to \eqref{pb} as long as we require a smallness condition on $f$.

Finally, in Theorem \ref{teo_reg2}, we proved that no smallness condition is required on $f$ to provide $BV$-solutions on condition that a suitable growth assumption is imposed on $g$. In this case, in general, the solutions are not bounded anymore. 

\medskip

Both of the results of this section are sharp as the next example shows.

\medskip

The following example shows that, for a suitable $f\in L^{N,\infty}(\Omega)$ with norm above the critical threshold found in Theorem \ref{teo_reg}, the unique solution to \eqref{pb} (here $g(s)=s$; see Theorem \ref{teomainuniqueL2}) is not bounded anymore. 

\begin{example}\label{example}
	Let us fix $\Omega = B_1(0)$, let $N\geq 3$ and let $0<\alpha < N-1$; it is not difficult to be  convinced that a radial solution to problem
	$$
	\label{pbie}
	\begin{cases}
		\dis -\operatorname{div}\left(\frac{D u_\alpha }{\sqrt{1+|D u_\alpha|^2}}\right) + u_\alpha = \frac{N-1}{|x|} \left(v_\alpha (|x|)+\frac{r^{-\alpha+1}-r}{N-1}\right)=:f_\alpha(x)& \text{in}\; B_1(0),\\
		u_\alpha=0 & \text{on}\;\partial B_1(0)\,
	\end{cases}
	$$
	is given, if $|x|=r$,  by $u_\alpha (x)=r^{-\alpha} - 1$ provided $v_\alpha:(0,1)\mapsto \mathbb{R}^+$ is given by
	
	$$
	v_{\alpha}(r)=\frac{\alpha r^{-\alpha-1}(\alpha^2 r^{-2\alpha -2}- \frac{\alpha+2 - N}{N-1})}{(1+\alpha^2 r^{-2\alpha-2})^{\frac{3}{2}}}\,.
	$$
	An explicit calculation of the norm gives that
	$$
	\left\|f_\alpha(x)\right\|_{L^{N,\infty}(B_1(0))}  = (N-1)\omega_{N}^{\frac{1}{N}} = \tilde{\mathcal S}_{1}^{-1};
	$$
 a similar computation can be found in Example $1$ of \cite{gop2}.
\end{example}

\section*{Acknowledgements}
F. Oliva and F. Petitta are partially supported by the Gruppo Nazionale per l’Analisi Matematica, la Probabilità e le loro Applicazioni (GNAMPA) of the Istituto Nazionale di Alta Matematica (INdAM). 
S. Segura de Le\'on is partially supported by  Grant PID2022-136589NB-I00 founded  by MCIN/AEI/10.13039/5
01100011033 as well as by Grant RED2022-134784-T funded by MCIN/AEI/1
0.13039/501100011033.

The authors would also like to warmly thank Prof. J.M. Maz\'on for bringing to our attention papers \cite{dt},  \cite{ACM_MA}, and \cite{gm} as well as  for further interesting comments on the preliminary version of this manuscript.

\end{document}